		\patchcmd{\ps@pprintTitle}{\footnotesize\itshape
		Preprint submitted to \ifx\@journal\@empty Elsevier
		\else\@journal\fi\hfill\today}{\relax}{}{}
\newtheorem{theorem}{Theorem}[section]
\newtheorem{lemma}{Lemma}[section]
\newtheorem{rem}{Remark}[section]
\journal{arXiv}
\def\tr{{\rm{tr}}}
\begin{document}

\begin{frontmatter}
\title{The Basins of Attraction in a Modified May--Holling--Tanner Predator-Prey Model with Allee Effect}
\author[QUTaddress,UDLAaddress]{Claudio Arancibia--Ibarra}
\address[QUTaddress]{School of Mathematical Sciences, Queensland University of Technology (QUT), Brisbane, Australia.}
\address[UDLAaddress]{Facultad de Educaci\'on, Universidad de Las Am\'ericas (UDLA), Santiago, Chile.}
\begin{abstract}
I analyse a modified May--Holling--Tanner predator-prey model considering an Allee effect in the prey and alternative food sources for predator. Additionally, the predation functional response or predation consumption rate is linear. The extended model exhibits rich dynamics and we prove the existence of separatrices in the phase plane separating basins of attraction related to oscillation, co-existence and extinction of the predator-prey population. We also show the existence of a homoclinic curve that degenerates to form a limit cycle and discuss numerous potential bifurcations such as saddle-node, Hopf, and Bogadonov--Takens bifurcations. We use simulations to illustrate the behaviour of the model.
\end{abstract}
\begin{keyword}
Predator-prey model, Allee effect, bifurcations, basin of attraction.
\end{keyword}
\end{frontmatter}

\newpage
\section{Introduction}
In the last decade interactions between species are appearing in different fields of Population Dynamics. In particular, predation models have proposed and studied extensively due to their increasing importance in both Biology and applied Mathematics \cite{yu, zhao}. Predation models such as the Holling-Tanner model \cite{arrows} (or May--Holling--Tanner \cite{saez}) are of particular mathematical interest for temporal and spatio--temporal domains \cite{banerjee1, ghazaryan, martinez}. The Holling--Tanner model has been used extensively to model many real--world predator--prey interactions \cite{hanski,hanski2,hanski3,turchin2}. For instance, it has been used by Hanski {\em et al.\ } \cite{hanski} to investigate the predator--prey interaction between the least weasel (Mustela nivalis) and the field vole (Microtus agrestis). This study was based  under the hypothesis that generalist predators predicts correctly the geographic gradient in rodent oscillations in Fennoscandia. Additionally, the authors showed that the amplitude and cycle period decreasing from north to south.
	
The traditional Holling--Tanner model is describing by the following pair of equations 
	\begin{equation}\label{prey-predator}
	\begin{aligned}
	\dfrac{dx}{dt} &=	 rx\left( 1-\dfrac{x}{K}\right) - H(x)y, \\
	\dfrac{dy}{dt} &=	 sy\left( 1-\dfrac{y}{nx}\right).
	\end{aligned}
	\end{equation} 
	Here $x(t)$ is used to represent the size of the prey population at time $t$, and $y(t)$ is used to represent the size of the predator population at time $t$. Moreover, $r$ is the intrinsic growth rate for the prey, $s$ is the intrinsic growth rate for the predator, $n$ is a measure of the quality of the prey as food for the predator, $K$ is the prey environmental carrying capacity, $nx$ can be interpreted as a prey dependent carrying capacity for the predator. Moreover, the growth of the predator and prey population is a logistic form and all the parameters are assumed to be positive. In addition, the behaviour of the Holling--Tanner model depends on the type of the predation functional response chosen. This response function is used to represent the impact of predation upon the prey species. A Holling Type I functional response provides a mechanism to explain the survival advantage for animals to form large groups or herds assuming protection from external threats. In many cases, clustering reduces the total area (relative to the total mass) exposed to chemicals, extreme weather, bacteria or predators \cite{ turchin, may}. In \eqref{prey-predator} the functional response corresponds to $H(x) = qx$. Where $q$ is the maximum predation rate per capita \cite{turchin}. The resulting  Holling--Tanner model is an autonomous two--dimensional system of differential equations and is given by Kolmogorov type \cite{freedman} given by
	\begin{equation}\label{ht1}
	\begin{aligned}
	\dfrac{dx}{dt} & = rx \left( 1-\dfrac{x}{K}\right)-qxy, \\ 
	\dfrac{dy}{dt} & = sy\left( 1\ -\dfrac{y}{nx}\right) \,.
	\end{aligned}  
	\end{equation}

Additional complexity can be incorporated into these models in order to make them more realistic. In the case of severe prey scarcity, some predator species can switch to another available food, although its population growth may still be limited by the fact that its preferred food is not available in abundance. For instance, least weasel is a generalist and nomadic species \cite{hanski}. This ability can be modelled by adding a positive constant $c$ to the environmental carrying capacity for the predator \cite{aziz}. Therefore, we have a modification to the logistic growth term in the predator equation, namely $(1-y/nx)$ is replaced by $(1-y/(nx+c))$ as shown below;
	
	\begin{equation}\label{mht1}
	\begin{aligned}
	\dfrac{dx}{dt} &= rx\left(1-\dfrac{x}{K} \right) - qxy\,, \\
	\dfrac{dy}{dt} &= sy\left(1-\dfrac{y}{nx+c} \right) \,.
	\end{aligned} 
	\end{equation}
	
	Models such as \eqref{mht1} are known as modified Holling--Tanner models \cite{aziz, arancibia, feng, singh} as the predator acts as a generalist since it avoids extinction by utilising an alternative source of food. Note that the Holling--Tanner predator-prey model also considers the case of a specialist predator, i.e.\ $c=0$ \cite{korobei}. It is assumed that a reduction in a predator population has a reciprocal relationship with per capita availability of its favorite food \cite{aziz}. Nevertheless, when $c>0$, the modified Holling--Tanner does not have these abnormalities and it enhances the predictions about the interactions. This model was proposed in \cite{aziz}, but the model was only analysed partially. Using a Lyapunov function \cite{korobei}, the global stability of a unique positive equilibrium point was shown. 
	
	On the other hand, in this manuscript we consider a density-dependent phenomenon in which fitness, or population growth, increases as population density increases \cite{kramer, allee, berec, courchamp, stephens}. This phenomenon is called Allee effect or positive density dependence in population dynamics \cite{liermann}.	These mechanisms are connected with individual cooperation such as strategies to hunt, collaboration in unfavourable abiotic conditions, and reproduction \cite{courchamp3}. When the population density is low species might have more resources and benefits. However, there are species that may suffer from a lack of conspecifics. This may impact their reproduction or reduce the probability to survive when the population volume is low \cite{courchamp2}. The Allee effect may appear due to a wide range of biological phenomena, such as reduced anti-predator vigilance, social thermo-regulation, genetic drift, mating difficulty, reduced defense against the predator, and deficient feeding because of low population densities \cite{stephens2}. With an Allee effect included, the Holling--Tanner Type I model \eqref{mht1} becomes
\begin{equation}
\begin{aligned} \label{hta1}
\dfrac{dx}{dt} &= rx\left(1-\dfrac{x}{K}\right)(x-m) - qxy\,,\\
\dfrac{dy}{dt} &= sy\left(1-\dfrac{y}{nx+c} \right).
\end{aligned}
\end{equation}
The growth function $\jmath(x) = rx(1-x/K)(x-m)$ has an enhanced growth rate as the population increases above the threshold population value $m$. If $\jmath(0)=0$ and $\jmath'(0) \geq 0$ - as it is the case with $m  \leq 0$ - then $\jmath(x)$ represents a proliferation exhibiting a weak Allee effect, whereas if $\jmath(0)=0$ and $\jmath'(0)<0$ - as it is the case with $m>0$ - then $\jmath(x)$ represents a proliferation exhibiting a strong Allee effect \cite{yue2}. 
	
The Holling--Tanner model with Allee effect is discussed further in Section~\ref{mod} and a topological equivalent model is derived. In Section~\ref{res}, we study the main properties of the model. That is, we prove the stability of the equilibrium points and give the conditions for saddle-node bifurcations and Bogadonov--Takens bifurcations. In Section~\ref{bas} we study the impact in the basins of attraction of the inclusion of the modification. We conclude the manuscript summarising the results and discussing the ecological implications.

\section{The Model}\label{mod}
The Holling--Tanner model with Allee effect and alternative food is given by \eqref{hta1}, and for biological reasons we only consider the model in the domain $\Omega=\{(x,y)\in\mathbb{R}^2, ~x\geq0, ~y\geq0\}$. The equilibrium points of system \eqref{hta1} are $(K,0)$, $(m,0)$, $(0,c)$, and $(x^*,y^*)$, this last point(s) being defined by the intersection of the nullclines $y=nx+c$ and $y=r(1-x/K)(x-m)/q$.  In order to simplify the analysis, we follow \cite{arancibia,arancibia2,blows} and convert \eqref{hta1} to a topologically equivalent nondimensionalised model that has fewer parameters. Following \cite{arancibia,arancibia2,blows} we introduce a change of variable and time rescaling, given by the function $\varphi :\breve{\Omega}\times\mathbb{R}\rightarrow \Omega\times\mathbb{R}$, where $\varphi(u,v,\tau)=(x,y,t)$ is defined by $x=Ku$, $y=Knv$, $d\tau =Krdt$ and $\breve{\Omega}=\{(u,v)\in \mathbb{R}^2,~u\geq0,~v\geq0\}$. Additionally, we set $C:=c/(Kn)$, $S:=s/(Kr)$, $Q:=nq/r$ and $M:=m/K$, so $(M,S,Q,C)\in\Pi=(-1,1)\times \mathbb{R}^3_+$. By substitution of these new parameters into \eqref{hta1} we obtain
\begin{equation}\label{hta2} 
\begin{aligned}
\dfrac{du}{d\tau} & =  \ u\left(\left(1-u\right)\left(u-M\right) -Qv\right) \,,\\
\dfrac{dv}{d\tau} & =  \ \dfrac{Sv}{u+C}\left(u-v+C\right) \,,
\end{aligned}
\end{equation}

Note that system \eqref{hta1} is topologically equivalent to system \eqref{hta2} in $\Omega$ and the function $\varphi$ is a diffeomorphism preserving the orientation of time since $\det (\varphi(u,v,\tau))=Kn/r>0$ \cite{chicone}.

So, instead of analysing system \eqref{hta1} we analyse the topologically equivalent system \eqref{hta2}. Moreover, as $du/d\tau=uR(u,v)$ and $dv/d\tau=vW(u,v)$ with $R(u,v)=(1-u)(u-M)-Qv$ and $W(u,v)=S(u-v+C)/(u+C)$, system \eqref{hta2} is of Kolmogorov type. That is, the axes $u=0$ and $v=0$ are invariant. The $u$-nullclines of system \eqref{hta2} are $u=0$ and $v=(1-u)(u-M)/Q$, while the $v$-nullclines are $v=0$ and $v=u+C$. Hence, the equilibrium points for system \eqref{hta2} are  $(0,0)$, $(1,0)$, $(M,0)$, $(0,C)$ and the point(s) $(u^*,v^*)$ with $v^*=u^*+C$ and where $u^*$ is determined by the solution(s) of
\begin{align}\label{htae1}
\begin{aligned}
& (1-u)(u-M)/Q=u+C \,, \quad\text{or equivalently}\,,\\
& d(u)=u^2-(1+M-Q)u+(M+CQ)=0\,.
\end{aligned}
\end{align}
Define the functions $g(u)=(1-u)(u-M)/Q$ and $h(u)=(u+C)$ and observe that $\lim\limits_{u \rightarrow \pm \infty} g(u)= - \infty$ and $g(0)=-M$. So, \eqref{htae1} can have at most two positive real root, which are depending on the value of $M$ and $Q$, see Figure \ref{Fig.eqpoint}. Additionally, the solution of the equation \eqref{htae1} are given by
\begin{equation}\label{delta}
u_{1,2} = \dfrac12 \left(1+M-Q \pm \sqrt{\Delta} \right)\,, \quad {\rm with}
\,\, \Delta=(1+M-Q)^2-4(M+CQ)\,,
\end{equation}
such that $u_1\leq E \leq u_2<1$, where $E=(1+M-Q)/2$.

\begin{figure}
\centering
\includegraphics[width=14cm]{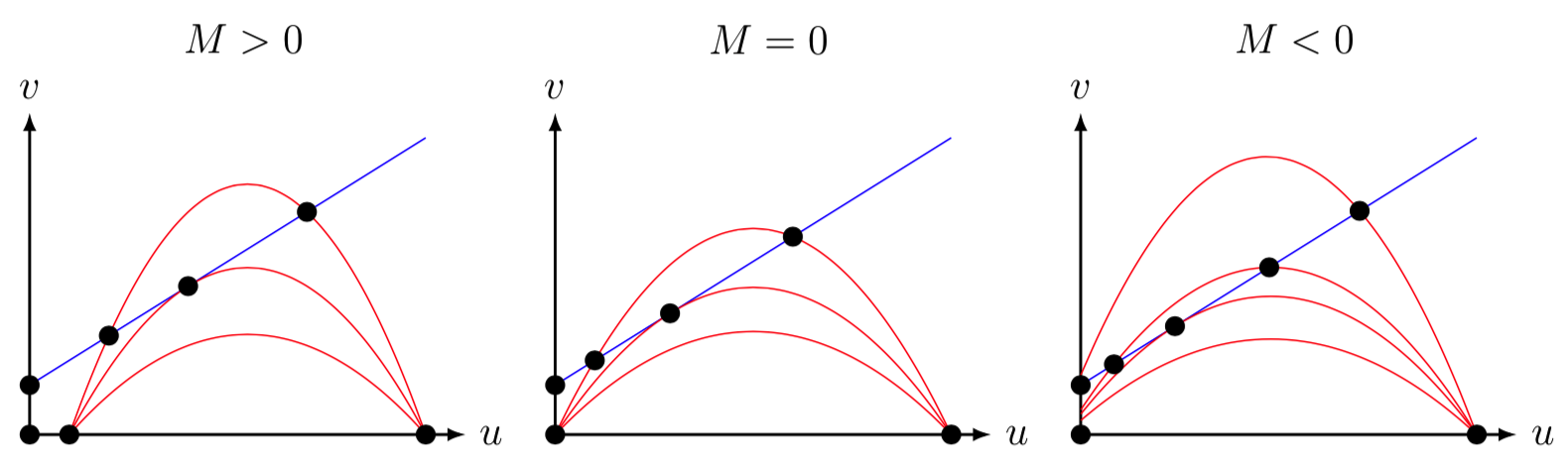}
\vspace*{-0.35cm} 
\caption{The intersections of the function $g(u)$ (red line) and the straight line $h(u)$ (blue lines) by changing $Q$ for the three possible cases of strong and weak Allee effect.}
\label{Fig.eqpoint}
\end{figure}

\subsection{Number of positive equilibrium points}\label{num}
Modifying the parameter $M$ and $Q$ impacts $\Delta$ and hence the number of positive equilibrium points. In particular, 
\begin{enumerate}
\item Strong Allee effect ($M>0$)
\begin{enumerate}[(a)]
\item If $1+M-Q>0$ and
\begin{enumerate}[(i)]
\item $\Delta<0$, then \eqref{hta2} has no equilibrium points in the first quadrant;
\item $\Delta>0$, then \eqref{hta2} has two equilibrium points $P_{1,2}=(u_{1,2},u_{1,2}+C)$ in the first quadrant; and
\item $\Delta=0$, then \eqref{hta2} has one equilibrium point $(E,E+C)$ of order two in the first quadrant, 
\end{enumerate}
\item If $1+M-Q\leq0$, then \eqref{hta2} has no equilibrium points in the first quadrant.
\end{enumerate}
\item Weak Allee effect ($M\leq0$)
\begin{enumerate}[(a)]
\item If $1+M-Q>0$ and $M+CQ>0$ and
\begin{enumerate}[(i)]
\item $\Delta<0$, then \eqref{hta2} has no equilibrium points in the first quadrant;
\item $\Delta>0$, then \eqref{hta2} has two equilibrium points $P_{1,2}=(u_{1,2},u_{1,2}+C)$ in the first quadrant; and
\item $\Delta=0$, then \eqref{hta2} has one equilibrium point $(E,E+C)$ of order two in the first quadrant, 
\end{enumerate}
\item If $1+M-Q>0$ and $M+CQ<0$ or $1+M-Q<0$ and $M+CQ<0$, then \eqref{hta2} has one equilibrium point $P_2$ in the first quadrant, since $u_1<0<u_2$.
\item If $1+M-Q>0$ and $M+CQ=0$, then $\Delta=(1+M-Q)^2$. Therefore, \eqref{hta2} has one equilibrium point $P_3=(u_{3},u_{3}+C)$ in the first quadrant, since $u_1=0$ and $u_2$ became $u_3=1+M-Q$.
\item If $1+M-Q=0$ and $M+CQ<0$, then $\Delta=-4(M+CQ)$. Therefore, \eqref{hta2} has one equilibrium point $P_4=(u_{4},u_{4}+C)$ in the first quadrant, since $u_1<0$ and $u_2$ became $u_4=\sqrt{-(M+CQ)}$.
\item If $1+M-Q\leq0$ and $M+CQ\geq0$, then \eqref{hta2} has no equilibrium points in the first quadrant.
\end{enumerate}
\end{enumerate}
\begin{rem}
Observe that none of these equilibrium points explicitly depend on the system parameter $S$. Therefore, $S$ is one of the natural candidates to act as bifurcation parameter.
\end{rem}

\section{Main Results}\label{res}

In this section, we discuss the stability of the equilibrium points of system \eqref{hta2} for strong and weak Allee effect. 
\begin{theorem} \label{the2}
All solutions of \eqref{hta2} which are initiated in the first quadrant are bounded and eventually end up in $\Phi=\{(u,v),\ 0\leq u\leq1,\ 0\leq v\leq1\}$.
\end{theorem}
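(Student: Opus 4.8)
The plan is to exhibit a forward-invariant, attracting compact region by testing the direction of the vector field on candidate boundary segments and combining this with one-dimensional comparison arguments; the invariance of the coordinate axes (already noted, since \eqref{hta2} is of Kolmogorov type, so $u=0$ and $v=0$ are invariant) guarantees that any orbit starting in the first quadrant stays there, so it suffices to build the trapping region inside $\breve{\Omega}$ and to rule out escape to infinity.

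First I would control the prey coordinate. Writing $\frac{du}{d\tau}=g(u)-Quv$ with $g(u)=u(1-u)(u-M)$, note that for $u>1$ each factor of $g$ has fixed sign ($u>0$, $1-u<0$, and $u-M>0$ because $M<1$), so $g(u)<0$; since also $-Quv\le0$, we get $\frac{du}{d\tau}<0$ whenever $u>1$ and $v\ge0$, while on the edge $u=1$ one has $\frac{du}{d\tau}=-Quv\le0$. Hence $\{u\le1\}$ is forward invariant. Because $g(u)\sim-u^3\to-\infty$, the scalar comparison $\frac{du}{d\tau}\le g(u)$ against $\dot w=g(w)$ (for which $w=1$ attracts every initial datum $w(0)>1$) simultaneously excludes blow-up in $u$ and yields $\limsup_{\tau\to\infty}u(\tau)\le1$.

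Next I would bound the predator. On $\{u\le1\}$ the substitution $u\le1$ in the last factor gives $\frac{dv}{d\tau}=\frac{Sv}{u+C}(u-v+C)\le\frac{Sv}{u+C}(1+C-v)$, a logistic-type differential inequality whose comparison solution tends to $1+C$; and on the edge $v=1+C$ one computes $\frac{dv}{d\tau}=\frac{S(1+C)}{u+C}(u-1)\le0$. Together with the previous step this shows the rectangle $[0,1]\times[0,1+C]$ is forward invariant and attracting. To descend to $\Phi$ itself I would analyse the only part of $\partial\Phi$ interior to this rectangle, the segment $\{v=1,\ 0\le u\le1\}$: there $\frac{dv}{d\tau}=\frac{S}{u+C}(u-(1-C))$, so the flow enters $\Phi$ (points downward) exactly where $u<1-C$. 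The plan is to show that on the complementary corner, where $u>1-C$ and $v\ge1$, the coupling forces descent: a large $v$ makes $(1-u)(u-M)-Qv<0$, so $\dot u<0$ pushes $u$ to the left, which lowers the predator nullcline $v=u+C$ and, once the orbit is below it, drives $v$ downward across $v=1$.

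I expect this last reduction to be the main obstacle. Closing it rigorously amounts to ruling out $\omega$-limit points with $v>1$ inside the corner $\{u>1-C\}$; but any interior equilibrium satisfies $v^*=u^*+C$, which exceeds $1$ precisely when $u^*>1-C$, so the descent mechanism above must be quantified carefully and may require the parameter regime to keep such equilibria (and the reachable part of the corner) below $v=1$. Handling this coupling—rather than the routine sign checks of the first two steps—is the crux, and I would devote the bulk of the argument to it, using the nested-rectangle attractor $[0,1]\times[0,1+C]$ as the arena in which the final confinement to $\Phi$ is established.
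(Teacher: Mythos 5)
Your first two steps are correct, and they take a genuinely different route from the paper. The paper's own proof only performs your first sign check (that $\Gamma=\{0\leq u\leq 1,\ v\geq 0\}$ is invariant because $du/d\tau\leq 0$ on $u=1$) and then handles possible escape to infinity in $v$ by passing to the Poincar\'e compactification $U=u/v$, $V=1/v$ and invoking a blow-up analysis ``as in'' a cited companion paper, with no details supplied. Your scalar comparisons --- $du/d\tau\leq g(u)$ with $g(u)=u(1-u)(u-M)$, and on $\{u\leq 1\}$ the logistic inequality $dv/d\tau\leq \frac{Sv}{u+C}(1+C-v)$ --- replace that machinery by a self-contained argument, and they buy something the paper never writes down: an explicit forward-invariant attracting rectangle $[0,1]\times[0,1+C]$. (A small caveat applies equally to both arguments: orbits on the $v=0$ axis with $u(0)>1$ only converge to $(1,0)$ without ever entering $\{u\leq 1\}$, so ``eventually end up in'' should be read as attraction to the region.)

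The obstacle you flag in your final step is not a gap in your argument but a flaw in the statement: the theorem is false as written, and your descent to $\{v\leq 1\}$ cannot be closed in general. Any equilibrium with $v$-coordinate above $1$ is a constant solution that never enters $\Phi$: the point $(0,C)$ whenever $C>1$, and, more tellingly, $P_2=(u_2,u_2+C)$ has $v_2=u_2+C>1$ exactly when $u_2>1-C$ --- which occurs, e.g., for $M=0.01$, $C=\tfrac12$ and $Q$ small, since then $1+M-Q>0$, $M+CQ>0$, $\Delta>0$ and $u_2=\tfrac12\left(1+M-Q+\sqrt{\Delta}\right)\to 1$ as $Q\to 0^+$, giving $v_2\approx 1.49$. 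This is precisely the configuration you identified when you observed that interior equilibria satisfy $v^*=u^*+C>1$ iff $u^*>1-C$. The paper's proof conceals the problem behind its opening assertion that ``all the equilibrium points lie inside of $\Phi$,'' which is true only under the unstated restrictions $C\leq 1$ and $u_2+C\leq 1$ (satisfied by the small values of $C$ used in the paper's figures, but not by the parameter set $\Pi=(-1,1)\times\mathbb{R}^3_+$). The correct general statement replaces $\Phi$ by your rectangle $[0,1]\times[0,1+C]$, and that corrected statement is exactly what your first two steps already prove; so rather than devoting further effort to the $\{u>1-C,\ v\geq 1\}$ corner, the right move is to repair the statement.
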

\begin{proof}
First, observe that all the equilibrium points lie inside of $\Phi$. Additionally, as the system is of Kolmogorov type, the $u$-axis and  $v$-axis are invariant sets of \eqref{hta2}. Moreover, the set $\Gamma=\{(u,v),\ 0\leq u\leq1,\  v\geq0\}$ is an invariant region since $du/d\tau \leq0$ for $u=1$ and $v\geq0$. That is, trajectories entering into $\Gamma$ remain in $\Gamma$. On the other hand, by using the Poincar\'e compactification \cite{chicone, dumortier} which is given by the transformation $U=u/v$ and $V=1/v$ then $dU/d\tau=(1/v)du/d\tau-(u/v^2)dv/d\tau$ and $dV/d\tau=-(1/v^2)dv/d\tau$. Then, by applying the blowing-up method used in \cite{arancibia}, the result follows. 
\end{proof}

\subsection{The nature of the equilibrium points}

To determine the nature of the equilibrium points we compute the Jacobian matrix $J(u,v)$ of \eqref{hta2}
\[J(u,v)=\begin{pmatrix}\label{mat}
(1-u)(u-M)-Qv+u(1+M-2u)  & -Qu \\ 
\dfrac{Sv^2}{(u+C)^2}  &  \dfrac{S(u+C-2v)}{(u+C)} 
\end{pmatrix},\]
The stability of the equilibrium points $(0,0)$; $(1,0)$; $(M,0)$; and $(0,C)$ is 
\begin{lemma}\label{eqax}
The equilibrium points $(0,0)$ and $(1,0)$ are a saddle points and $(M,0)$ is unstable if $0<M<1$. Moreover, the equilibrium point $(0,C)$ is stable if $-CQ\leq M<1$ and a saddle point if $M<-CQ$. Furthermore, if there are no positive equilibrium points in the first quadrant, i.e. for $\Delta<0$ \eqref{delta}, then $(0,C)$ is globally asymptotically stable in the first quadrant.
\end{lemma}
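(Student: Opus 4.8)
The plan is to treat the local classification of the four boundary equilibria first, since each lies on an invariant axis and the computation is short, and then to handle the global statement separately. Because $u=0$ and $v=0$ are invariant (the system is Kolmogorov), the entry $J_{21}=Sv^2/(u+C)^2$ vanishes at every point with $v=0$ and $J_{12}=-Qu$ vanishes at every point with $u=0$; hence $J(u,v)$ is triangular at all four points and its eigenvalues are just the diagonal entries. Evaluating them gives the eigenvalues $-M$ and $S$ at $(0,0)$; $M-1$ and $S$ at $(1,0)$; $M(1-M)$ and $S$ at $(M,0)$; and $-(M+CQ)$ and $-S$ at $(0,C)$. Since $S>0$ throughout and $M<1$ by the constraint $M\in(-1,1)$, one reads off immediately that $(1,0)$ is a saddle, that $(M,0)$ is an unstable node when $0<M<1$, and that $(0,C)$ has both eigenvalues negative exactly when $M+CQ>0$ (stable) and is a saddle when $M<-CQ$; the sign of $-M$ at the origin gives the saddle nature of $(0,0)$ in the strong Allee regime $M>0$. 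The borderline $M=-CQ$ yields a zero eigenvalue at $(0,C)$ and would require a separate center-manifold remark.

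The substantive part is the global asymptotic stability of $(0,C)$ when $\Delta<0$. First I would invoke Theorem~\ref{the2} to confine every trajectory to the compact, forward-invariant box $\Phi$, so that every $\omega$-limit set is a nonempty compact connected subset of $\Phi$. When $\Delta<0$, equation~\eqref{htae1} has no real roots, so system~\eqref{hta2} has no equilibrium in the open first quadrant; combined with the invariance of the axes this means every interior trajectory remains interior and its $\omega$-limit set contains no interior fixed point. I would then apply the Poincar\'e--Bendixson theorem: the $\omega$-limit set of an interior orbit must be a single equilibrium, a periodic orbit, or a cycle graph of boundary equilibria joined by heteroclinic orbits.

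The hard part is excluding the last two possibilities. A periodic orbit of a planar flow must enclose equilibria whose indices sum to $+1$; since the open first quadrant contains no equilibrium when $\Delta<0$, no periodic orbit can lie in the interior (alternatively one might seek a Dulac function $B(u,v)=u^{\alpha}v^{\beta}$ making the divergence of $B\cdot(\dot u,\dot v)$ one-signed on the interior, ruling out closed orbits directly). It then remains to show that no boundary graphic can attract interior points. For this I would read off the one-dimensional boundary flows: on $u=0$ one has $\dot v=Sv(C-v)/C$, so $(0,C)$ attracts and $(0,0)$ repels along the $v$-axis, while on $v=0$ one has $\dot u=u(1-u)(u-M)$, and then use the transverse eigenvalues computed above to determine which connections exist.

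Since $S>0$ makes $(0,C)$ attracting in its transverse direction as well, $(0,C)$ is the unique local attractor on $\partial\Phi$, whereas the unstable eigenvalue $S$ at $(1,0)$ and $(0,0)$ drives their unstable manifolds into the interior or up the $v$-axis toward $(0,C)$; thus no candidate boundary cycle can be attracting from the interior, and every interior $\omega$-limit set must collapse onto $(0,C)$. Combining the exclusion of periodic orbits and of attracting boundary graphics with the local asymptotic stability of $(0,C)$ then yields its global asymptotic stability in the first quadrant. I expect the delicate step to be the rigorous exclusion of an attracting heteroclinic graphic on $\partial\Phi$, which is where the transverse-eigenvalue bookkeeping does the real work.
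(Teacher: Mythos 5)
Your proof is correct and follows essentially the same route as the paper: linearization at the four boundary equilibria (your triangular-Jacobian reading of the eigenvalues is equivalent to the paper's determinant/trace computation, and yields the same signs at $(0,0)$, $(1,0)$, $(M,0)$ and $(0,C)$), followed by confinement via Theorem~\ref{the2} and the Poincar\'e--Bendixson theorem for the global claim. If anything you are more careful than the paper, which does not spell out the exclusion of interior periodic orbits (your index argument, using that $\Delta<0$ leaves no interior equilibrium to enclose) or of attracting boundary graphics (your transverse-eigenvalue bookkeeping), and, like the paper, you defer the degenerate borderline $M=-CQ$ to a separate center-manifold remark.
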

\begin{proof}
The determinant and the trace of the Jacobian matrix \eqref{mat} evaluated at $(0,0)$ are $\det(J(0,0))=-MS$ and $\tr(J(0,0))=S-M$. Similarly, the determinant of the Jacobian matrix \eqref{mat} evaluated at $(1,0)$ is $\det(J(1,0))=-(1-M)S<0$, since $M<1$. Then, the determinant and the trace of the Jacobian matrix \eqref{mat} evaluated at $(M,0)$ are $\det(J(M,0))=SM(1-M)>0$ and $\tr(J(M,0))=M(1-M)+S>0$, since $M<1$. Finally,  the determinant and the trace of the Jacobian matrix \eqref{mat} evaluated at $(0,C)$ are $\det(J(0,C))= S(M+QC)$ and $\tr(J(0,C))=-(M+CQ+S)$. It follows that $(1,0)$ is always a saddle point and $(M,0)$ is unstable if the system \eqref{hta2} is affected by strong Allee effect ($0<M<1$). Moreover, the equilibrium point $(0,0)$ is a saddle point if $0\leq M<1$ and unstable equilibrium point if $M<0$. Furthermore, the equilibrium point $(0,C)$ is stable if $-CQ\leq M<1$ and a saddle point if $M<-CQ$.
Finally, by Theorem~\ref{the2} we have that solutions starting in the first quadrant are bounded and eventually end up in the invariant region $\Gamma$. Moreover, the equilibrium point $(1,0)$ is a saddle point and, if $\Delta<0$ as defined in \eqref{delta}, there are no equilibrium points in the interior of the first quadrant. Thus, by the Poincar\'e--Bendixson Theorem the unique $\omega$-limit of all the trajectories is the equilibrium point $(0,C)$.
\end{proof}

Next, we consider the stability of the two positive equilibrium points $P_{1,2}$ of system \eqref{hta2} in the interior of $\Phi$. These equilibrium points lie on the curve $u=v+C$ such that $g(u)=u+C$ \eqref{htae1}. The Jacobian matrix of system \eqref{hta2} at these equilibrium points becomes
\begin{align*}
J(u,u+C)=\begin{pmatrix}
u(1+M-2u)  & -Qu \\ 
S  & -S 
\end{pmatrix}.
\end{align*}
Moreover, the determinant and the trace of $J(u,u+C)$ are given by
\begin{align*}
\det(J(u,u+C)) =&Su(-1-M+Q+2u),\\
\tr(J(u,u+C)) &= u(1+M-2u)-S.
\end{align*}

Thus, the sign of the determinant depends on the sign of $-1-M+Q+2u$, while the sign of the trace depends on the sign of $ u(1+M-2u)-S$. This gives the following results.

\begin{lemma}\label{p1}
Let the system parameters of \eqref{hta2} be such that $1+M-Q>0$ and $M+CQ>0$. Then, $\Delta>0$, as defined in \eqref{delta}, and therefore the equilibrium point $P_1$ is a saddle point.
\end{lemma}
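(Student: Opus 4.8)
The plan is to read off the local type of $P_1$ from the sign of the determinant of the Jacobian at that point, exploiting the fact that for a $2\times2$ matrix a negative determinant forces two real eigenvalues of opposite sign, hence a saddle, so the trace never enters. I would start from the expression already computed in the excerpt, $\det(J(u,u+C))=Su(-1-M+Q+2u)$, and make the key observation that the bracket is exactly the derivative of the defining quadratic: writing $d(u)=u^2-(1+M-Q)u+(M+CQ)$ from \eqref{htae1}, one has $-1-M+Q+2u=2u-(1+M-Q)=d'(u)$. Thus evaluating the determinant at an equilibrium reduces to reading off the slope of the parabola $d$ there, and $\det(J(u_i,u_i+C))=S\,u_i\,d'(u_i)$.

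Next I would pin down the two sign factors. Since $d$ opens upward and $u_1$ is its smaller root, $d'(u_1)=2u_1-(1+M-Q)=-\sqrt{\Delta}$, which is strictly negative as soon as $\Delta>0$. It then remains to check $u_1>0$, both so that $P_1$ genuinely lies in the interior of the first quadrant and so that the factor $u_1$ is positive. I would get this from Vieta's formulas applied to $d$: the product of the roots is $u_1u_2=M+CQ>0$ and their sum is $u_1+u_2=1+M-Q>0$, so the two real roots are both positive, consistent with $0<u_1\le E\le u_2<1$ noted after \eqref{delta}. Combining these, $\det(J(P_1))=-S\,u_1\sqrt{\Delta}<0$, and since $S>0$ the determinant is negative, so $P_1$ is a saddle.

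For the assertion $\Delta>0$ I would appeal to the classification in Section~\ref{num}: the hypotheses $1+M-Q>0$ and $M+CQ>0$ put us in case 1(a) or 2(a), where $P_{1,2}$ are two distinct positive equilibria precisely when $\Delta>0$, which is the operative regime throughout this discussion of the two interior equilibria.

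The step I expect to require the most care is exactly this existence claim. The sign conditions $1+M-Q>0$ and $M+CQ>0$ do not by themselves force $\Delta=(1+M-Q)^2-4(M+CQ)$ to be positive — one can keep $1+M-Q$ small and positive while taking $M+CQ$ large, which drives $\Delta$ negative and leaves no interior equilibrium at all — so the statement must be read together with the standing assumption that two positive equilibria are present. Once $\Delta>0$ is granted, the saddle conclusion is the robust and routine part: it is a one-line sign check on $\det(J(P_1))<0$, needing neither the trace nor an explicit eigenvalue computation. I would therefore foreground the positivity of $u_1$ and the existence of the second root as the genuine content, and treat ``$P_1$ is a saddle'' as an immediate corollary of the negative determinant.
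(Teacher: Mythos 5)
Your proof is correct and takes essentially the same route as the paper: the paper's proof likewise evaluates the factor $-1-M+Q+2u$ at $u_1=\tfrac12\left(1+M-Q-\sqrt{\Delta}\right)$, obtains $-\sqrt{\Delta}<0$, and concludes $\det(J(P_1))<0$, hence a saddle. You are in fact more careful than the paper on the two points it leaves implicit --- the positivity of $u_1$ (your Vieta argument from $u_1u_2=M+CQ>0$ and $u_1+u_2=1+M-Q>0$) and the observation that $\Delta>0$ does \emph{not} follow from the stated hypotheses alone and must be read as a standing assumption --- both of which are genuine, if minor, defects in the lemma as written.
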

\begin{proof}
Evaluating $-1-M+Q+2u$ at $u_1$ gives:\\
\[\begin{aligned}
-1-M+Q+2u_1&=-1-M+Q+(1+M-Q-\sqrt{\Delta})=-\sqrt{\Delta}<0.
\end{aligned}\]
Hence, $\det(J(P_1))<0$ and $P_1$ is thus a saddle point.
\end{proof}

Note that if $M+CQ=0$ then the equilibrium point $P_1$ collapses with $(0,C)$ and if $M+CQ<0$ then the equilibrium point $P_1$ crosses to the second or third quadrant.
\begin{lemma}\label{p2}
Let the system parameters be such that $1+M-Q>0$ and $M+CQ>0$ or $1+M-Q>0$ and $M+CQ<0$ or $1+M-Q<0$ and $M+CQ<0$. Then, $\Delta>0$ \eqref{delta} and therefore the equilibrium point $P_2$ is:
\begin{enumerate}[(i)]
\item a repeller if $0<S<S_1:=\dfrac{1}{2}(1+M-Q+\sqrt{\Delta})(Q+\sqrt{\Delta})$; and
\item an attractor if $S>S_1$,
\end{enumerate}
\end{lemma}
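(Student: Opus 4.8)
The plan is to read off the local phase portrait of $P_2=(u_2,u_2+C)$ from the sign of $\det(J(P_2))$ and $\tr(J(P_2))$, in the same spirit as the treatment of $P_1$ in Lemma~\ref{p1}. Under each of the three stated sign combinations one first checks that $\Delta>0$, so that $u_2=\tfrac12(1+M-Q+\sqrt{\Delta})$ is a genuine positive abscissa and $P_2$ lies in the interior of $\Phi$; I would dispose of this existence point at the outset.

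The decisive observation is that, unlike $P_1$, the point $P_2$ can never be a saddle. Substituting $u_2$ into the determinant factor gives
\[ -1-M+Q+2u_2 = -1-M+Q+\bigl(1+M-Q+\sqrt{\Delta}\bigr) = \sqrt{\Delta} > 0, \]
so that $\det(J(P_2))=Su_2\sqrt{\Delta}>0$. Because the determinant is strictly positive, the two eigenvalues are either real of the same sign or a complex-conjugate pair, and in both situations the character of $P_2$ is controlled entirely by the sign of the trace. I would then write $\tr(J(P_2))=u_2(1+M-2u_2)-S$, which is affine and strictly decreasing in $S$, and locate the threshold $S_1$ at which it vanishes, namely $S_1=u_2(1+M-2u_2)$. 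The sign test is then immediate: $\tr(J(P_2))>0$ for $0<S<S_1$, so $P_2$ is a repeller, while $\tr(J(P_2))<0$ for $S>S_1$, so $P_2$ is an attractor. This yields parts (i) and (ii).

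The step I expect to demand the most care is not the eigenvalue bookkeeping but confirming that the repeller regime is genuinely nonempty, i.e.\ that $S_1>0$. Since $u_2>0$, this reduces to determining the sign of $1+M-2u_2$, and substituting $u_2$ shows that this amounts to comparing $\sqrt{\Delta}$ with $Q$. It is precisely here that the hypotheses on $1+M-Q$ and $M+CQ$ enter, since they constrain $\Delta$ relative to $Q^2$; establishing this inequality is the crux and is what makes $S_1$ a legitimate positive bifurcation value. It also explains, in line with the earlier remark, why $S$ is the natural bifurcation parameter: crossing $S=S_1$ reverses the sign of the trace while keeping $\det(J(P_2))>0$, which is exactly the configuration needed for the Hopf bifurcation discussed later.
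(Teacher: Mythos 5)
Your core computation is the same as the paper's: you evaluate $-1-M+Q+2u$ at $u_2$ to get $\sqrt{\Delta}>0$, hence $\det(J(P_2))=Su_2\sqrt{\Delta}>0$, and then let the sign of the trace $u_2(1+M-2u_2)-S$, affine and decreasing in $S$, decide between repeller and attractor. That is precisely the paper's proof, and your threshold $S_1=u_2(1+M-2u_2)=\tfrac12\bigl(1+M-Q+\sqrt{\Delta}\bigr)\bigl(Q-\sqrt{\Delta}\bigr)$ agrees with what the paper's proof actually computes; the factor $\bigl(Q+\sqrt{\Delta}\bigr)$ displayed in the lemma statement is a sign typo, since $1+M-2u_2=Q-\sqrt{\Delta}$.

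However, the step you flag as the crux --- proving $S_1>0$ from the hypotheses on $1+M-Q$ and $M+CQ$ --- is a step that would fail, because the claim is false. Take $M=0.1$, $Q=0.05$, $C=0.1$: then $1+M-Q=1.05>0$, $M+CQ=0.105>0$, $\Delta=1.1025-0.42=0.6825>0$, yet $\sqrt{\Delta}\approx0.826>Q$, so $1+M-2u_2<0$ and $S_1<0$ even though $P_2\approx(0.938,1.038)$ is a genuine interior equilibrium. The paper itself concedes this in Section~\ref{bas}: ``$S_1$ depends on $Q$ and it can actually be negative. In that case $P_2$ is an attractor for all $S>0$.'' The lemma does not need $S_1>0$: when $S_1\leq0$ item (i) is vacuously true and item (ii) covers all $S>0$, so the correct move is to delete that step, not to attempt it --- your determinant/trace argument already proves the statement as written. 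A smaller related point: in the first parameter combination ($1+M-Q>0$ and $M+CQ>0$) the condition $\Delta>0$ cannot be ``checked'' as you propose, since those signs are compatible with $\Delta<0$ (the no-equilibrium case 1.(a)(i) of Subsection~\ref{num}); there $\Delta>0$ is an additional hypothesis, and only in the two combinations with $M+CQ<0$ does $\Delta=(1+M-Q)^2-4(M+CQ)>0$ follow automatically.
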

\begin{proof}
Evaluating $-1-M+Q+2u$ at $u_2$ gives:\\
\[\begin{aligned}
-1-M+Q+2u_2&=-1-M+Q+(1+M-Q+\sqrt{\Delta})=\sqrt{\Delta}>0.
\end{aligned}\]
Hence, $\det(J(P_2))>0$. Evaluating $u(1+M-2u)-S$ at $u=u_2$ gives
\[\begin{aligned}
u(1+M-2u)=& \dfrac{1}{2}(1+M-Q+\sqrt{\Delta})(1+M-(1+M-Q+\sqrt{\Delta}))=S_1,\\
\end{aligned}\]
Therefore, the sign of the trace, and thus the behaviour of $P_2$ depends on the parity of $u_2(1+M-2u_2)-S_1$, see Figure \ref{Fig.summSA}. 
\end{proof}  

\begin{rem}
Note that if $M+CQ=0$ then $\Delta=(1+M-Q)^2$ and therefore $u(1+M-2u)-S=-(1+M-Q)(1+M-2Q)-S$. Hence the sign of the trace depends on the parity of $(1+M-Q)(1+M-2Q)-S$
\end{rem}

If $\Delta=0$ \eqref{delta} the equilibrium points $P_1$ and $P_2$ collapse such that  $u_1=u_2=E=(1+M-Q)/2$. Therefore, system \eqref{hta2} has one equilibrium point of order two in the first quadrant given by $(E,E+C)$. Consequently, we have that $\Delta=0$ and therefore $C=((1+M-Q)^2-4M)/4Q$.
\begin{lemma} \label{p1p2}
Let the system parameters be such that $\Delta=0$ \eqref{delta}. Then, the equilibrium point $(E,E+C)$ is:
\begin{enumerate}[(i)]
\item a saddle-node attractor if $S<S_2:=\dfrac{1}{2}Q(1+M-Q)$; and
\item a saddle-node repeller if $S>S_2$\,.
\end{enumerate}
\end{lemma}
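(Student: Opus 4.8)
The plan is to recognise $(E,E+C)$ as a semi-hyperbolic equilibrium (non-hyperbolic, with a single zero eigenvalue) and to classify it by means of the standard theory of semi-hyperbolic singular points \cite{dumortier}: reduce to the centre direction, read off the leading (quadratic) coefficient to confirm the saddle-node character, and let the sign of the surviving eigenvalue decide whether the parabolic sector attracts or repels.

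First I would confirm the degeneracy of the linear part. Evaluating the determinant of $J(u,u+C)$ at $u=E=(1+M-Q)/2$ and using the identity $-1-M+Q+2E=0$ (already exploited in Lemmas~\ref{p1} and~\ref{p2}) gives $\det(J(E,E+C))=S\,E\,(-1-M+Q+2E)=0$, so one eigenvalue is zero and the second equals the trace, $\tr(J(E,E+C))=E(1+M-2E)-S$. Since $1+M-2E=Q$, this collapses to $\tr(J(E,E+C))=\tfrac12 Q(1+M-Q)-S=S_2-S$, which is nonzero precisely when $S\neq S_2$. Hence $(E,E+C)$ is semi-hyperbolic with eigenvalues $0$ and $S_2-S$, consistent with its being the coalescence of $P_1$ and $P_2$ at $\Delta=0$.

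Next I would put the linear part into Jordan form. The eigenvector for the zero eigenvalue is $(1,1)$ and the one for $S_2-S$ is $(S_2,S)$; translating the equilibrium to the origin via $u=E+p$, $v=E+C+w$ and applying the change of basis built from these eigenvectors transforms \eqref{hta2} into a system $\dot\xi=\tilde F(\xi,\eta)$, $\dot\eta=(S_2-S)\eta+\tilde G(\xi,\eta)$, with $\tilde F,\tilde G$ collecting the quadratic-and-higher terms. Solving $(S_2-S)\eta+\tilde G(\xi,\eta)=0$ for the local invariant manifold $\eta=\psi(\xi)=O(\xi^2)$ and substituting back yields a scalar reduced equation $\dot\xi=a_2\xi^2+O(\xi^3)$. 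I would compute $a_2$ from the nonlinearities of \eqref{hta2}, which stem only from the cubic polynomial $u\,R(u,v)$ and the rational function $v\,W(u,v)$, and show $a_2\neq0$. By the semi-hyperbolic classification theorem, a nonzero leading coefficient of even (here second) order forces $(E,E+C)$ to be a saddle-node, while the attracting versus repelling character of its parabolic sector is governed by the sign of the surviving hyperbolic eigenvalue $S_2-S$, yielding the two cases $S<S_2$ and $S>S_2$.

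The main obstacle is the explicit evaluation of $a_2$ and the verification that it does not vanish: this requires carrying the Taylor expansion of the transformed field to second order through the (nonlinear) change of basis and through the substitution $\eta=\psi(\xi)$, which is algebraically heavy, especially because the expansion of $1/(u+C)$ about $u=E$ contributes to $\tilde G$. A useful cross-check is provided by Lemmas~\ref{p1} and~\ref{p2}: the merged point is the collision of the saddle $P_1$ with the node $P_2$, so the persistence of a saddle on one side and a node on the other as $\Delta\to0^+$ already signals a saddle-node, and the normal-form computation serves to confirm the type rigorously and to fix the stability from $\mathrm{sign}(S_2-S)$.
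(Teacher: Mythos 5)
Your linear analysis is correct and coincides with everything the paper's own proof actually contains: using $1+M-2E=Q$ one gets $\det(J(E,E+C))=0$ and $\tr(J(E,E+C))=EQ-S=S_2-S$, and the paper stops there, asserting the saddle-node dichotomy (its closing phrase, ``the parity of $Q(1+M-Q)/2-S_2$'', is identically zero as written, so the published proof is really just the det/trace computation). Your plan to finish via the semi-hyperbolic classification is the right way to make this rigorous, but as written it contains a genuine gap: the quadratic coefficient $a_2$ of the reduced equation is never computed, and $a_2\neq0$ is precisely the hypothesis of the classification theorem that makes $(E,E+C)$ a saddle-node rather than a more degenerate point; the ``cross-check'' that a saddle and a node collide as $\Delta\to0^+$ is a heuristic, not a proof, since the limit of nearby phase portraits does not by itself determine the sector structure at the degenerate parameter value. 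The gap is also cheaper to close than you suggest: to second order you do not need the invariant manifold $\eta=\psi(\xi)$ at all, because $\psi=O(\xi^2)$ feeds into the $\xi$-equation only at cubic order; the coefficient is $a_2=\tfrac12\,W\cdot D^2F(E,E+C)(U,U)/(W\cdot U)$, where $U=(1,1)^T$ is the right null vector and $W$ a left null vector of $J(E,E+C)$, i.e.\ a single projection of the quadratic part of \eqref{hta2} at the point. Up to normalisation this is the same nondegeneracy quantity $W\cdot[D^2F(U,U)]\neq0$ that the paper only verifies later, in its Sotomayor argument (Theorem~\ref{the5}); so your route is sounder in design than the paper's proof of Lemma~\ref{p1p2}, but incomplete in execution at exactly the step that matters.

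There is a second, subtler problem: by writing that the stability of the parabolic sector ``is governed by the sign of the surviving hyperbolic eigenvalue $S_2-S$, yielding the two cases $S<S_2$ and $S>S_2$'', you dodge the only substantive sign decision in the lemma, and making it explicit overturns the statement's labels. For $S<S_2$ the surviving eigenvalue $S_2-S$ is \emph{positive}, so the hyperbolic direction repels and $(E,E+C)$ is a saddle-node \emph{repeller}; for $S>S_2$ it is a saddle-node attractor --- the opposite of items (i)--(ii) as stated. This assignment agrees with the caption of Figure~\ref{Fig.summp1p2} (which contradicts the lemma, an internal inconsistency of the paper) and with continuity from Lemma~\ref{p2}: the Hopf threshold satisfies $S_1\to S_2$ as $\Delta\to0$, and $P_2$ is an attractor precisely for $S>S_1$. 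A complete proof must commit to this sign; had you done so, your own computation would have exposed that the lemma's labels are swapped rather than silently inheriting them.
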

\begin{proof}
Evaluating $-1-M+Q+2u$ at $u=E$ gives    
\[\begin{aligned}
-1-M+Q+2u_2&=-1-M+Q+(1+M-Q)=0.
\end{aligned}\]
Hence, $\det(J(P_2))=0$. Evaluating $u(1+M-2u)-S$ at $u=E$ gives
\[\begin{aligned}
u(1+M-2u)=& \dfrac{1}{2}Q(1+M-Q)=S_2,\\
\end{aligned}\]
Therefore, the behaviour of the equilibrium point $(E,E+C)$ depends on the parity of $Q(1+M-Q)/2-S_2$. 
\end{proof}
See Figure \ref{Fig.summp1p2} for phase portraits related to both cases of Lemma~\ref{p1p2}.

\begin{lemma}\label{p3}
Let the system parameters be such that $1+M-Q>0$ and $M+CQ=0$. Then, the equilibrium point $P_3$ is:
\begin{enumerate}[(i)]
\item a repeller if $0<S<S_3:=-(1+M-Q)(1+M-2Q)$; and
\item an attractor if $S>S_3$,
\end{enumerate}
\end{lemma}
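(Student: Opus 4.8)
The plan is to treat this statement as the degenerate limit of the analysis behind Lemma~\ref{p2}, in which the constraint $M+CQ=0$ pushes one of the roots of \eqref{delta} onto the $v$-axis. Under $M+CQ=0$ one has $\Delta=(1+M-Q)^2$, and since $1+M-Q>0$ the two roots become $u_1=0$ and $u_2=1+M-Q=:u_3$; the root $u_1=0$ merges with $(0,C)$, leaving $P_3=(u_3,u_3+C)$ as the only positive equilibrium. As $P_3$ lies on the line $u=v+C$, the already-reduced Jacobian $J(u,u+C)$, together with its computed determinant $Su(-1-M+Q+2u)$ and trace $u(1+M-2u)-S$, applies directly at $u=u_3$.

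First I would evaluate the determinant. Substituting $u_3=1+M-Q$ into $-1-M+Q+2u$ gives $-1-M+Q+2(1+M-Q)=1+M-Q=u_3$, which is positive by hypothesis, so $\det(J(P_3))=Su_3^2>0$. This rules out a saddle and forces the two eigenvalues to have real parts of a common sign; hence the classification into attractor versus repeller is decided entirely by the sign of the trace.

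Next I would evaluate the trace. Using $1+M-2u_3=-1-M+2Q$, the quantity $u_3(1+M-2u_3)$ equals $-(1+M-Q)(1+M-2Q)$, so $\tr(J(P_3))=-(1+M-Q)(1+M-2Q)-S=S_3-S$ with $S_3:=-(1+M-Q)(1+M-2Q)$ exactly as in the statement. Therefore $\tr>0$ precisely when $0<S<S_3$ and $\tr<0$ when $S>S_3$.

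Finally I would invoke the Hartman--Grobman theorem: whenever $\det>0$ and $\tr\neq0$ the equilibrium is hyperbolic, so a positive trace places both eigenvalues in the right half-plane (a repeller) and a negative trace places them in the left half-plane (an attractor), which yields (i) and (ii). I do not expect any genuine obstacle, as every step is a direct substitution; the only point deserving care is to note that case (i) is non-empty only when $S_3>0$, i.e.\ when $1+M-2Q<0$, since the factor $1+M-Q$ is already positive---otherwise $\tr=S_3-S<0$ for all admissible $S$ and $P_3$ is an attractor throughout.
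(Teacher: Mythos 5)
Your proposal is correct and follows essentially the same route as the paper: substitute $u_3=1+M-Q$ into the determinant factor $-1-M+Q+2u$ and the trace expression $u(1+M-2u)-S$ of the reduced Jacobian $J(u,u+C)$, obtaining $\det(J(P_3))=Su_3^2>0$ and $\tr(J(P_3))=S_3-S$, with the sign of the trace deciding repeller versus attractor. Your added remarks---explicitly invoking hyperbolicity to justify the classification and noting that case (i) is vacuous unless $1+M-2Q<0$---are sound refinements of, not departures from, the paper's argument.
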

\begin{proof}
Evaluating $-1-M+Q+2u$ at $u_3=1+M-Q$ gives:\\
\[\begin{aligned}
-1-M+Q+2u&=1+M-Q>0.
\end{aligned}\]
Hence, $\det(J(P_3))>0$. Evaluating $u(1+M-2u)-S$ at $u_3=1+M-Q$ gives
\[\begin{aligned}
u(1+M-2u)=& -(1+M-Q)(1+M-2Q)=S_3,\\
\end{aligned}\]
Therefore, the sign of the trace, and thus the behaviour of $P_3$ depends on the parity of $u_3(1+M-2u_3)-S_3$, see Figure \ref{Fig.summp2}. 
\end{proof} 

\begin{lemma}\label{p4}
Let the system parameters be such that $1+M-Q=0$ and $M+CQ<0$. Then, the equilibrium point $P_4$ is:
\begin{enumerate}[(i)]
\item a repeller if $0<S<S_4:=\sqrt{-M-CQ}\left(Q-2\sqrt{-M-CQ}\right)$; and
\item an attractor if $S>S_4$,
\end{enumerate}
\end{lemma}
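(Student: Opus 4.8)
The plan is to follow exactly the template established in the proofs of Lemmas~\ref{p2} and~\ref{p3}, since $P_4$ lies on the same curve $u=v+C$ and hence its Jacobian reduces to the form $J(u,u+C)$ with $\det(J(u,u+C))=Su(-1-M+Q+2u)$ and $\tr(J(u,u+C))=u(1+M-2u)-S$. The only new ingredient is the location of the equilibrium: from the case analysis in Section~\ref{num}, when $1+M-Q=0$ and $M+CQ<0$ we have $\Delta=-4(M+CQ)$ and the single admissible root is $u_4=\sqrt{-M-CQ}>0$.

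First I would establish the sign of the determinant. Substituting $1+M-Q=0$ gives $-1-M+Q=0$, so $-1-M+Q+2u_4=2\sqrt{-M-CQ}>0$ because $M+CQ<0$. Hence $\det(J(P_4))=Su_4\cdot 2\sqrt{-M-CQ}>0$, which rules out a saddle and means that the nature of $P_4$ is governed entirely by the sign of the trace.

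Next I would compute the trace factor. Using $1+M=Q$ one obtains $1+M-2u_4=Q-2\sqrt{-M-CQ}$, so $u_4(1+M-2u_4)=\sqrt{-M-CQ}\,\bigl(Q-2\sqrt{-M-CQ}\bigr)=:S_4$, and therefore $\tr(J(P_4))=S_4-S$. Combining with $\det(J(P_4))>0$ yields the claimed dichotomy: when $0<S<S_4$ the trace is positive and $P_4$ is a repeller, whereas when $S>S_4$ the trace is negative and $P_4$ is an attractor.

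I expect no genuine obstacle here, as the argument is a direct substitution into formulas already derived. The only point requiring care is the bookkeeping with the sign constraints, namely verifying that $M+CQ<0$ forces $u_4$ to be real and positive and that the determinant factor remains strictly positive; both follow at once from the hypotheses and need no separate analysis. One may additionally remark, in parallel with the preceding cases, that the borderline value $S=S_4$ is the natural candidate for a Hopf bifurcation, but the statement concerns only the strict inequalities.
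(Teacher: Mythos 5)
Your proposal is correct and follows essentially the same route as the paper's proof: substitute $u_4=\sqrt{-M-CQ}$ into the general formulas $\det(J(u,u+C))=Su(-1-M+Q+2u)$ and $\tr(J(u,u+C))=u(1+M-2u)-S$, use $1+M-Q=0$ to simplify, and read off the dichotomy from the sign of $S_4-S$. In fact your computation is slightly cleaner than the paper's, which drops a factor of $2$ (writing $-1-M+Q+2u_4=\sqrt{-M-CQ}$ instead of $2\sqrt{-M-CQ}$), though this does not affect the sign conclusion.
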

\begin{proof}
Evaluating $-1-M+Q+2u$ at $u_4=\sqrt{-M-CQ}$ gives:\\
\[\begin{aligned}
-1-M+Q+2u&=\sqrt{-M-CQ}>0.
\end{aligned}\]
Hence, $\det(J(P_4))>0$. Evaluating $u(1+M-2u)-S$ at $u_4=\sqrt{-M-CQ}$ gives
\[\begin{aligned}
u(1+M-2u)=& \sqrt{-M-CQ}(Q-2\sqrt{-M-CQ})=S_4,\\
\end{aligned}\]
Therefore, the sign of the trace, and thus the behaviour of $P_4$ depends on the parity of $u_4(1+M-2u_4)-S_4$, see Figure \ref{Fig.summp2}. 
\end{proof}  

\begin{lemma}\label{noeq}
Let the system parameters be such that $1+M-Q>0$, $M+CQ>0$ and $\Delta<0$ \eqref{delta} or $1+M-Q\leq0$ and $M+CQ\geq0$. Then, there are no positive equilibrium points in the first quadrant. Therefore, $(0,C)$ is globally asymptotically stable in the first quadrant.
\end{lemma}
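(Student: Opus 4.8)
The plan is to split the statement into an algebraic part (absence of interior equilibria) and a dynamical part (global attraction of $(0,C)$), the latter essentially reproducing the final paragraph of the proof of Lemma~\ref{eqax}. First I would recall from the reduction in Section~\ref{mod} that every interior equilibrium has the form $(u^*,u^*+C)$ with $u^*$ a root of $d(u)=u^2-(1+M-Q)u+(M+CQ)$ from \eqref{htae1}; since $C>0$, a root $u^*>0$ automatically gives $v^*=u^*+C>0$, so the interior equilibria correspond exactly to the \emph{positive} roots of $d$. The whole algebraic content is therefore to show that under either hypothesis $d$ has no positive root.

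For the first set of hypotheses ($1+M-Q>0$, $M+CQ>0$, $\Delta<0$) this is immediate: by \eqref{delta}, $\Delta<0$ means $d$ has no real root at all. For the second set ($1+M-Q\leq0$, $M+CQ\geq0$) I would argue by Vieta's formulas: the roots of $d$ have sum $1+M-Q\leq0$ and product $M+CQ\geq0$. If $\Delta<0$ there are no real roots; if $\Delta\geq0$ the two real roots share a sign (nonnegative product) while summing to a nonpositive number, so neither can be positive (when the product is zero one root is $0$ and the other equals the sum $\leq0$). Hence in all cases there is no $u^*>0$, i.e.\ no positive equilibrium in the interior of the first quadrant.

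For the second part I would invoke Theorem~\ref{the2}: every orbit starting in the first quadrant is bounded and enters the compact invariant region $\Phi$ (equivalently $\Gamma$). By Lemma~\ref{eqax} the hypothesis $M+CQ\geq0$ (equivalently $-CQ\leq M$) makes $(0,C)$ stable, while $(0,0)$, $(M,0)$ and $(1,0)$ are repelling into the interior or saddles whose stable manifolds lie on the invariant axes $u=0$, $v=0$; thus no interior orbit can have one of these as an $\omega$-limit point. A periodic orbit would have to lie in the open first quadrant (it cannot cross the invariant axes) and, by the Poincar\'e index theorem, enclose equilibria of total index $+1$; since the interior contains no equilibria at all, no periodic orbit can exist. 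The Poincar\'e--Bendixson theorem then forces the $\omega$-limit set of every interior orbit to be the single equilibrium $(0,C)$, which is therefore globally asymptotically stable in the first quadrant.

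The main obstacle is the bookkeeping in this last step rather than any hard computation: Poincar\'e--Bendixson only guarantees that each $\omega$-limit set is an equilibrium, a periodic orbit, or a graphic (a cycle of heteroclinic connections among boundary equilibria), so I must exclude a boundary heteroclinic cycle acting as an attractor, which follows from the repelling/saddle classification of $(0,0)$, $(M,0)$, $(1,0)$ together with the local stability of $(0,C)$. A secondary delicate point is the degenerate subcase $M+CQ=0$ of the second hypothesis, where $\det(J(0,C))=0$ and $(0,C)$ is non-hyperbolic (it is exactly the parameter value at which $P_1$ merges with $(0,C)$); there a short center-manifold argument is needed to confirm the stability already asserted in Lemma~\ref{eqax}.
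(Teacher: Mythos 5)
Your proposal is correct and shares the paper's dynamical backbone---boundedness via Theorem~\ref{the2} followed by the Poincar\'e--Bendixson theorem---but it is genuinely more complete than the paper's own proof, which says only that solutions end up in $\Gamma$, that $(1,0)$ is a saddle, and that ``if $\Delta<0$ there are no equilibrium points in the interior,'' and then concludes. The published argument therefore covers only the first parameter regime; it never addresses the second hypothesis $1+M-Q\leq0$, $M+CQ\geq0$, where $\Delta$ may well be nonnegative and one must check that both roots of $d(u)$ in \eqref{htae1} are nonpositive---exactly your Vieta step (sum of roots $1+M-Q\leq0$, product $M+CQ\geq0$, with the degenerate product-zero case giving one root at $0$ and the other equal to the nonpositive sum). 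You also make explicit three points the paper leaves silent: that closed orbits are excluded because any periodic orbit would have to enclose interior equilibria of total index $+1$, of which there are none; that Poincar\'e--Bendixson in principle allows a boundary graphic as $\omega$-limit, which you rule out from the classification of $(0,0)$, $(M,0)$, $(1,0)$ (under strong Allee effect the stable manifolds of the saddles $(0,0)$ and $(1,0)$ lie on the invariant $u$-axis, and under weak Allee effect $(0,0)$ is a repeller, so no interior orbit can limit on these points or on a cycle through them); and that in the subcase $M+CQ=0$ the point $(0,C)$ is non-hyperbolic, since $\det(J(0,C))=S(M+CQ)=0$ at the moment $P_1$ merges with $(0,C)$, so the stability asserted in Lemma~\ref{eqax} genuinely requires a center-manifold (or direct sector) argument there rather than linearisation. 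Each of these is a real gap in the paper's two-sentence proof that your version fills; the only cost is length, and your decomposition into an algebraic part (no positive roots of $d$) and a dynamical part (Poincar\'e--Bendixson bookkeeping) is the cleaner way to present the lemma as actually stated.
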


\begin{proof}
Finally, by Theorem~\ref{the2} we have that solutions starting in the first quadrant are bounded and eventually end up in the invariant region $\Gamma$. Moreover, the equilibrium point $(1,0)$ is a saddle point and, if $\Delta<0$ \eqref{delta}, there are no equilibrium points in the interior of the first quadrant. Thus, by the Poincar\'e--Bendixson Theorem the unique $\omega$-limit of all the trajectories is the equilibrium point $(0,C)$, see Figure \ref{Fig.summSA} (a).
\end{proof}

\subsection{Bifurcation Analysis}
In this section, we discuss some of the possible bifurcation scenarios of system \eqref{hta2}. Observe that the stability of $(0,0)$, $(1,0)$, $(M,0)$ and $P_{1}$ do not change the stability. Additionally, none of the equilibrium points $P_2$, $P_3$, $P_4$ and $(E,E+C)$ explicitly depend on the system parameter $S$. Therefore, $S$ is one of the natural candidates to act as bifurcation parameter.

\begin{theorem}\label{the5}
Let the system parameters be such that $\Delta=0$ \eqref{delta}. Then, system \eqref{hta2} experiences a saddle-node bifurcation at the equilibrium point $(E,E+C)$ (for changing $Q$).
\end{theorem}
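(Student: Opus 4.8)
The plan is to apply Sotomayor's theorem (see \cite{chicone, dumortier}) for the existence of a saddle-node bifurcation, taking $Q$ as the bifurcation parameter and the degenerate equilibrium $(E,E+C)$ as the bifurcation point. When $\Delta=0$ the two interior equilibria $P_1$ and $P_2$ coalesce at $(E,E+C)$, and from the computation in the proof of Lemma~\ref{p1p2} we already know that $\det(J(E,E+C))=0$, so the Jacobian has a zero eigenvalue. The first step is to verify that this zero eigenvalue is \emph{simple}: since $\tr(J(E,E+C))=S_2-S$, the remaining eigenvalue is nonzero precisely when $S\neq S_2$, so I would adopt $S\neq S_2$ as a standing hypothesis (the excluded value $S=S_2$ is exactly where the double-zero, Bogdanov--Takens, scenario arises and must be treated separately).

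Next I would compute the right and left null vectors of $J(E,E+C)$. Using $2E=1+M-Q$, the $(1,1)$ entry $E(1+M-2E)$ simplifies to $EQ$, so that
\begin{align*}
J(E,E+C)=\begin{pmatrix} EQ & -EQ \\ S & -S \end{pmatrix}.
\end{align*}
A short calculation then gives a right eigenvector $V=(1,1)^{\top}$ (from $JV=0$) and a left eigenvector $W=(S,-EQ)^{\top}$ (from $W^{\top}J=0$), both associated with the zero eigenvalue.

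With these in hand I would check the two genericity conditions. For transversality in the parameter $Q$, note that differentiating the vector field $f=(f_1,f_2)$ with respect to $Q$ yields $f_Q=(-uv,0)^{\top}$, which at $(E,E+C)$ equals $(-E(E+C),0)^{\top}$; hence $W^{\top}f_Q=-SE(E+C)$, and this is nonzero because $S>0$, while $E>0$ and $E+C>0$ in the parameter regime where the equilibrium exists ($1+M-Q>0$). For the nondegeneracy of the quadratic term I would evaluate $W^{\top}\big[D^2f(E,E+C)(V,V)\big]$: contracting the Hessians of the two components twice with $V=(1,1)^{\top}$ and again substituting $2E=1+M-Q$, the prey component telescopes to $-2E$ and the predator component cancels to $0$, so that $W^{\top}D^2f(V,V)=-2SE\neq 0$.

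I expect the main obstacle to be this last Hessian contraction. Writing the predator component as $f_2=Sv-Sv^2/(u+C)$, its three nonzero second partials must be combined at the point where $v=u+C=E+C$, and they cancel exactly; meanwhile the prey component requires the substitution $2E=1+M-Q$ to reduce the combination of second derivatives to $-2E$. Once both genericity quantities are confirmed nonzero, Sotomayor's theorem delivers the stated saddle-node bifurcation at $(E,E+C)$ as $Q$ crosses the value for which $\Delta=0$, consistent with the collision and disappearance of $P_1$ and $P_2$ described in Section~\ref{num}.
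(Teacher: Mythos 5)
Your proposal is correct and takes essentially the same route as the paper: Sotomayor's theorem at $(E,E+C)$ with $Q$ as bifurcation parameter, verifying $\det(J)=0$, computing right/left null vectors, and checking the transversality condition $W^{\top}f_Q\neq 0$ and the quadratic nondegeneracy $W^{\top}\left[D^2f(E,E+C)(V,V)\right]\neq 0$. If anything, your version is more careful than the paper's: you explicitly verify simplicity of the zero eigenvalue by excluding $S=S_2$ (the Bogdanov--Takens value of Theorem~\ref{the4}, a hypothesis the paper never states), and you work consistently with the full Kolmogorov vector field, under which the predator component of the Hessian contraction, $-2S\,(v-(u+C))^2/(u+C)^3$, genuinely vanishes at the equilibrium on $v=u+C$ — whereas the paper mixes a reduced vector field (in $F$ and $F_Q$) with full-field quantities (the $S$-dependent Hessian entry $-2C^2S/(1+C)^3$ and its left eigenvector), so your computations $W^{\top}f_Q=-SE(E+C)$ and $W^{\top}D^2f(V,V)=-2SE$ are the internally consistent ones.
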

\begin{proof}
The proof of this theorem is based on Sotomayor's Theorem \cite{perko}.  
For $\Delta=0$, there is only one equilibrium point $(E,E+C)$ in the first quadrant, with $E=(1+M-Q)/2$. 
From the proof of Lemma~\ref{p1p2} we know that $\det(J(E,E+C))=0$ if $\Delta=0$. Additionally, let $U=(1,1)^T$ the eigenvector corresponding to the eigenvalue $\lambda=0$ of the Jacobian matrix $J(E,E+C)$, and let 
$$W=\left(\dfrac{4S}{(3+3M-5Q)(1+M-Q)},1\right)^T$$ be the eigenvector corresponding to the eigenvalue $\lambda=0$ of the transposed Jacobian matrix $J(E,E+C)^T$. 
	
If we represent \eqref{hta2} by its vector form
\begin{align} \nonumber %\label{eq:vf}
F(u,v;Q) =\begin{pmatrix}
(1-u)(u-M)-Qv\\ 
u-v+C
\end{pmatrix},
\end{align}
then differentiating $F$ at $(E,E+C)$
with respect to the bifurcation parameter $Q$ gives
\[F_Q(E,E+C;Q)=\begin{pmatrix}
-\dfrac{1}{2}(1+M-Q+2C)\\ 
0
\end{pmatrix}.\]
Therefore,
\[W \cdot F_Q(E,E+C;Q)=-\dfrac{2S(1+M-Q+2C)}{(3+3M-5Q)(1+M-Q)}\\ \neq0.\]
Next, we analyse the expression $W \cdot [D^2F(E,E+C;Q)(U,U)]$.
Therefore, we first compute the Hessian matrix $D^2F(u,v;Q)(V,V)$, where $V=(v_1,v_2)$,	
\[\begin{aligned}
D^2F(u,v;Q)(V,V) =& \dfrac{\partial^2F(u,v;Q)}{\partial u^2}v_1v_1+\dfrac{\partial^2F(u,v;Q)}{\partial u\partial v}v_1v_2+\dfrac{\partial^2F(u,v;Q)}{\partial v\partial u}v_2v_1\\
&+\dfrac{\partial^2F(u,v;Q)}{\partial v^2}v_2v_2\,.
\end{aligned}\]
At the equilibrium point $(E,E+C)$ and $V=U$, this simplifies to
\[\begin{aligned}	
D^2F(E,E+C;Q)(U,U)& = \begin{pmatrix}
2(-2+M-Q)\\ 
-\dfrac{2C^2S}{(1+C)^3}
\end{pmatrix}\,. 
\end{aligned}\]
Therefore
\[\begin{aligned}
W \cdot [D^2F(E,E+C;Q)(U,U)]=-\dfrac{8S(2-M+Q)}{(3+3M-5Q)(1+M-Q)}-\dfrac{2SC^2}{(1+C)^3}\neq0 \,.
\end{aligned}\]
By Sotomayor's Theorem \cite{perko} it now follows that system \eqref{hta2} has a saddle-node bifurcation at the equilibrium point $(E,E+C)$, see Figure \ref{Fig.bif} and Figure \ref{Fig.summp1p2}.
\end{proof}

\begin{theorem} \label{the4}
Let the system parameters be such that $\Delta=0$ \eqref{delta} and $S=Q(1+M-Q)/2$. Then, system \eqref{hta2} experiences a Bogdanov--Takens bifurcation at the equilibrium point $(E,E+C)$ (for changing $(Q,S)$).
\end{theorem}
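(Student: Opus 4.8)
The plan is to establish a Bogdanov--Takens (BT) bifurcation of codimension two by showing that, at the stated parameter values, the equilibrium $(E,E+C)$ has a double-zero eigenvalue with the Jacobian being nilpotent (not the zero matrix), and then verifying the standard nondegeneracy and transversality conditions. First I would note that the condition $\Delta=0$ already forces $\det(J(E,E+C))=0$ (from the proof of Lemma~\ref{p1p2}), and the extra condition $S=Q(1+M-Q)/2=S_2$ forces $\tr(J(E,E+C))=0$, since by Lemma~\ref{p1p2} the trace vanishes precisely when $S=S_2$. Hence both eigenvalues are zero. The crucial first step is to check that $J(E,E+C)$ is \emph{not} the zero matrix, so that it is genuinely nilpotent with a single Jordan block; this is immediate because the off-diagonal entries $-QE$ and $S$ are nonzero.

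Next I would reduce the system to its BT normal form. The standard approach (following Perko \cite{perko} or Dumortier \cite{dumortier}) is to translate the equilibrium to the origin, then perform a linear change of coordinates that puts the nilpotent linear part into the canonical Jordan form $\bigl(\begin{smallmatrix}0&1\\0&0\end{smallmatrix}\bigr)$. After this linear transformation, I would expand the vector field to second order and compute the two quadratic coefficients (call them $a$ and $b$) that appear in the planar normal form
\begin{align*}
\dot\xi_1 &= \xi_2,\\
\dot\xi_2 &= a\,\xi_1^2 + b\,\xi_1\xi_2 + \text{higher order terms}.
\end{align*}
The genericity (nondegeneracy) conditions for a codimension-two BT point are $a\neq0$ and $b\neq0$; these would be verified by direct computation of the second-order Taylor coefficients at $(E,E+C)$, analogous to the Hessian computation already carried out in the proof of Theorem~\ref{the5}.

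Finally I would verify the transversality condition, namely that the map from the two unfolding parameters $(Q,S)$ to the pair $(\tr,\det)$ of the Jacobian (equivalently, to the two coefficients in the parameter-dependent unfolding) is a local diffeomorphism at the bifurcation point. Concretely, I would form the $2\times2$ Jacobian of $(\det J,\tr J)$ with respect to $(Q,S)$ and check that its determinant is nonzero; the fact (already emphasised in the preceding Remark) that the positive equilibria do not depend on $S$ while $\tr J$ depends on $S$ linearly makes $S$ an effective independent unfolding direction, and changing $Q$ moves $\Delta$ through zero, supplying the second independent direction.

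I expect the main obstacle to be the bookkeeping in the normal-form reduction: the linear change of variables together with the second-order expansion produces somewhat lengthy algebraic expressions for the quadratic coefficients $a$ and $b$, and care is needed to confirm that neither vanishes identically on the bifurcation set $\{\Delta=0,\ S=S_2\}$ (as opposed to vanishing only on a lower-dimensional subset). Once those coefficients are shown to be nonzero and the transversality determinant is confirmed nonzero, the conclusion follows directly from the standard BT bifurcation theorem \cite{perko, dumortier}.
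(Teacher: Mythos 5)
Your proposal is correct, and its first half coincides exactly with the paper's own argument: the paper likewise observes that $\Delta=0$ gives $\det J(E,E+C)=0$, that $S=S_2=Q(1+M-Q)/2$ gives $\mathrm{tr}\,J(E,E+C)=0$, and then exhibits the nontrivial nilpotent structure explicitly, writing $J(E,E+C)=\tfrac12 Q(1+M-Q)\left(\begin{smallmatrix}1&-1\\1&-1\end{smallmatrix}\right)$ and conjugating with $\Upsilon$ built from the eigenvector $(1,1)^T$ and generalised eigenvector $(1,0)^T$ to obtain $\Upsilon^{-1}J\Upsilon=\tfrac12 Q(1+M-Q)\left(\begin{smallmatrix}0&1\\0&0\end{smallmatrix}\right)$. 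The difference is what comes next: the paper stops at this point and simply asserts the Bogdanov--Takens bifurcation, citing \cite{perko} and \cite{xiao2}; it performs no reduction to the quadratic normal form, no check that the coefficients $a$ and $b$ are nonzero, and no transversality verification for the unfolding in $(Q,S)$. Your plan adds precisely those genericity and transversality steps, which the full codimension-two BT theorem requires, so if carried out it would be strictly more complete than the published proof. What the paper's route buys is brevity — it establishes only the linear degeneracy (double-zero eigenvalue with a single Jordan block) and delegates the rest to the literature — whereas your route buys an actual rigorous conclusion, at the cost of the second-order bookkeeping you anticipate. Be aware that your stated ``main obstacle'' is real and unresolved on both sides: neither your sketch nor the paper computes $a$ and $b$, so the possibility that one of them vanishes somewhere on the set $\{\Delta=0,\ S=S_2\}$ is an open item that a finished proof along your lines would have to settle by explicit computation (the Hessian calculation in the proof of Theorem~\ref{the5} gives a template for it but does not answer the question, since the BT normal-form coefficients must be evaluated after the linear change of coordinates, not in the original variables).
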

\begin{proof}
If $\Delta=0$, or equivalently $Q=1+M-2E$, and $E(1+M-2E)=S$, then the Jacobian matrix of system \eqref{hta2} evaluated at the equilibrium point $(E,E+C)$ simplifies to
\[\begin{aligned}
J(E,E+C) &=\begin{pmatrix}
E(1+M-2E) & -E(1+M-2E) \\ 
E(1+M-2E) & -E(1+M-2E) 
\end{pmatrix},\\
&=\dfrac{1}{2}Q(M-Q+1) \begin{pmatrix}
1 & -1 \\ 
1 & -1 
\end{pmatrix}. 
\end{aligned}\]
So, $\det(J(E,E+C))=0$ and $tr(J(E,E+C))=0$.
Next, we find the Jordan normal form of $J(E,E+C)$. The latter has two zero eigenvalues with eigenvector $\psi^1=(1,1)^T$. This vector will be the first column of the matrix of transformations $\Upsilon$. For the second column of $\Upsilon$ we choose the generalised eigenvector $\psi^2=(1,0)^T$. Thus,
$\Upsilon = \begin{pmatrix} 
1 & 1 \\ 
1 & 0 
\end{pmatrix}$ and 
\[ \begin{aligned}
\Upsilon^{-1}(J(E,E+C))\Upsilon &= \dfrac{1}{2}Q(1+M-Q)\begin{pmatrix}
0 & 1 \\ 
0 & 0 
\end{pmatrix} . 
\end{aligned}\]
Hence, we have the Bogdanov--Takens bifurcation \cite{perko}, or bifurcation of codimension two, and the equilibrium point $(E,E+C)$ is a cusp point for $(Q,S)=(Q_2,S_2(Q_2))$ such that $\Delta=0$ and $E(1+M-2E)=S$ \cite{xiao2}, see Figure \ref{Fig.bif} and Figure \ref{Fig.summp1p2}.
\end{proof}

\begin{figure}
\centering
\includegraphics[width=10cm]{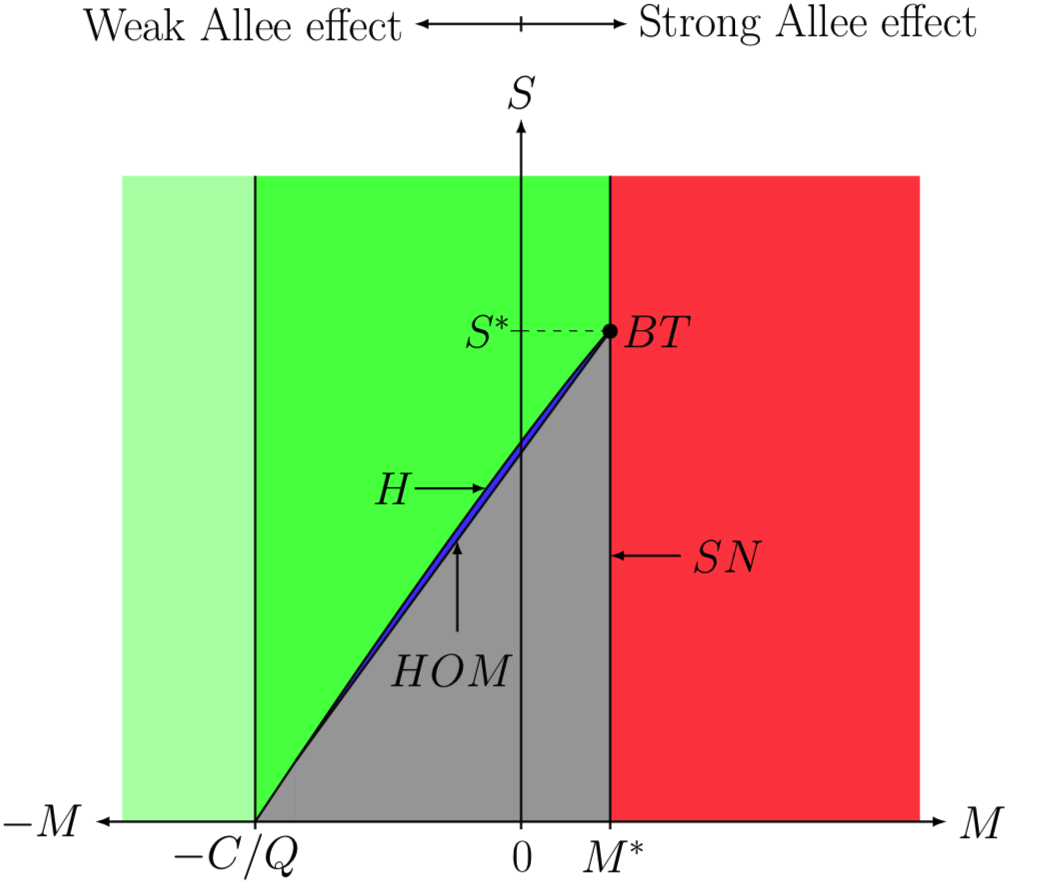}
\vspace*{-0.35cm}
\caption{The bifurcation diagram of system \eqref{hta2} with strong ($M>0$) and weak ($M\leq0$) Allee effect for $(Q,C)$ fixed and created with the numerical bifurcation package MATCONT \cite{matcont}. The curve H represents the Hopf curve where $P_2$ changes stability (Lemma~\ref{p2}), HOM represents the homoclinic curve of $P_1$, SN represents the saddle-node curve from Lemma~\ref{p1p2} where $\Delta=0$, and $BT$ represents the Bogdanov--Takens bifurcation from Theorem~\ref{the4} where $\Delta=0$.}
\label{Fig.bif}
\end{figure}
The bifurcation curves obtained from Lemma~\ref{p2}, Lemma~\ref{p1p2}, and  Theorem \ref{the5} divide the $(M,S)$-parameter-space into five parts, see Figure \ref{Fig.bif}. Modifying the parameter $M$ -- while keeping the other two parameters $(Q,C)$ fixed -- impacts the number of positive equilibrium points of system \eqref{hta2}. The modification of the parameter $S$ changes the stability of the positive equilibrium point $P_2$ of system \eqref{hta2}, while the other equilibrium points $(0,0)$, $(M,0)$, $(1,0)$ and $P_1$ do not change their behaviour. 
There are no positive equilibrium points in system \eqref{hta2} when the parameters $M,S$ are located in the red area where $\Delta<0$ \eqref{delta}. In this case, the equilibrium point $(0,C)$ is a global attractor, see Lemma~\ref{noeq} and Figure \ref{Fig.summSA} (a). For $M=M^*$, which is the saddle-node curve SN in Figure~\ref{Fig.bif}, the equilibrium points $P_1$ and $P_2$ collapse since $\Delta=0$, see Lemma \ref{p1p2} and Figures \ref{Fig.summp1p2} (a) and (b). So, system \eqref{hta2} experiences a saddle-node bifurcation and a Bogdanov--Takens bifurcation (labeled BT in Figure~\ref{Fig.bif}) along this line, see Theorems \ref{the5} and \ref{the4}, and see also Figure \ref{Fig.summp1p2} (c). When the parameter $M$ is located in $-C/Q<M<M^*$, system \eqref{hta2} has two equilibrium points $P_1$ and $P_2$. The equilibrium point $P_1$ is always a saddle point, see Lemma~\ref{p1}, while $P_2$ can be unstable or stable. For $(M,S)$ in the grey area the equilibrium point $P_2$ is unstable, see Figure~\ref{Fig.summSA} (a). For $(M,S)$ in the blue area the stable equilibrium point $P_2$ is surrounded by a stable limit cycle, see also Figures~\ref{Fig.summSA} (b). For $(M,S)$ in the  green area the equilibrium point $P_2$ is stable, see Figure~\ref{Fig.summSA} (c). Finally, for $(M,S)$ in the light green area system \eqref{hta2} has only one equilibrium point in the first quadrant which is always stable, see Lemmas \ref{p3} and \ref{p4}. Since $P_1$ collapse with $(0,C)$ or crosses to the second or third quadrant, see Figure~\ref{Fig.summp2})
  
\section{Basins of Attraction}\label{bas}

For system parameters $(Q,M,A,C)$ such that the conditions 1.(a) (strong Allee effect) and 2.(a) (weak Allee effect) presented in Subsection \ref{num} are met and for $S>S_1$, system \eqref{hta2} has two attractors, namely $(0,C)$ and $P_2$. Furthermore, at the critical value $S=S_1$, such that $tr(J(P_2))=0$, $P_2$ undergoes a Hopf bifurcation \cite{chicone}. Note that $S_1$ depends on $Q$ and it can actually be negative. In that case $P_2$ is an attractor for all $S>0$ (and as long as $\Delta>0$).

Next, we discuss the basins of attraction of the attractors $(0,C)$ and $P_2$ (for $S>S_1$) in $\Phi$ (see Theorem~\ref{the2}). The stable manifold of the saddle point $P_1$, $W^s(P_1)$, often acts as a separatrix curve between these two basins of attraction. 

Let $W^{u,s}_{\nearrow}(P_1)$ be the (un)stable manifold of $P_1$ that goes up to the right (from $P_1$) and let $W^{u,s}_{\swarrow}(P_1)$ be the (un)stable manifold of $P_1$ that goes down to the left (from $P_1$). From the phase plane and the nullclines of system \eqref{hta2} it immediately follows that $W^s_{\nearrow}(P_1)$ is connected with $(M,0)$ and $W^u_{\swarrow}(P_1)$ with $(0,C)$. Furthermore, everything in between of $W^{s}_{\nearrow}(P_1)$, $W^{u}_{\swarrow}(P_1)$ and the $u$-axis also asymptotes to the origin. 

For $\Delta>0$, $M>-CQ$ and depending on the value of $S$, there are different cases for the boundary of the basins of attraction in the invariant region $\Phi$, see Theorem~\ref{the2}. By continuity of the vector field in $S$, see \eqref{hta2}, we get:
\begin{enumerate}[(i)]
\item For $S$ on the grey region in the bifurcation diagram showed in Figure \ref{Fig.bif}, the equilibrium point $P_2$ is unstable, see lemma \ref{p2}, and $W^u_{\nearrow}(P_1)$ connects with $(0,C)$. Hence, $\Phi$ is the basin of attraction of $(0,C)$, see Figure \ref{Fig.summSA} (a) and (d).
\item For $S$ on the blue region in the bifurcation diagram showed in Figure \ref{Fig.bif}. There is a stable limit cycle that surrounds $P_2$ and $W^u_{\nearrow}(P_1)$ connects with this limit cycle. This limit cycle is created around $P_2$ via the Hopf bifurcation \cite{gaiko}. Therefore, $W^s(P_1)$ forms a separatrix curve between the basins of attraction of $P_2$ and $(0,C)$ in this parameter regime, see Figure \ref{Fig.summSA} (b) and (e).
\item  For $S$ on the green region in the bifurcation diagram showed in Figure \ref{Fig.bif}. Then, $W^s_{\swarrow}(P_1)$ intersects the boundary of $\Phi$, and $W^s(P_1)$ again forms the separatrix curve in $\Phi$, see Figure \ref{Fig.summSA} (c) and (f).
\end{enumerate}

\begin{figure}
\centering
\includegraphics[width=13cm]{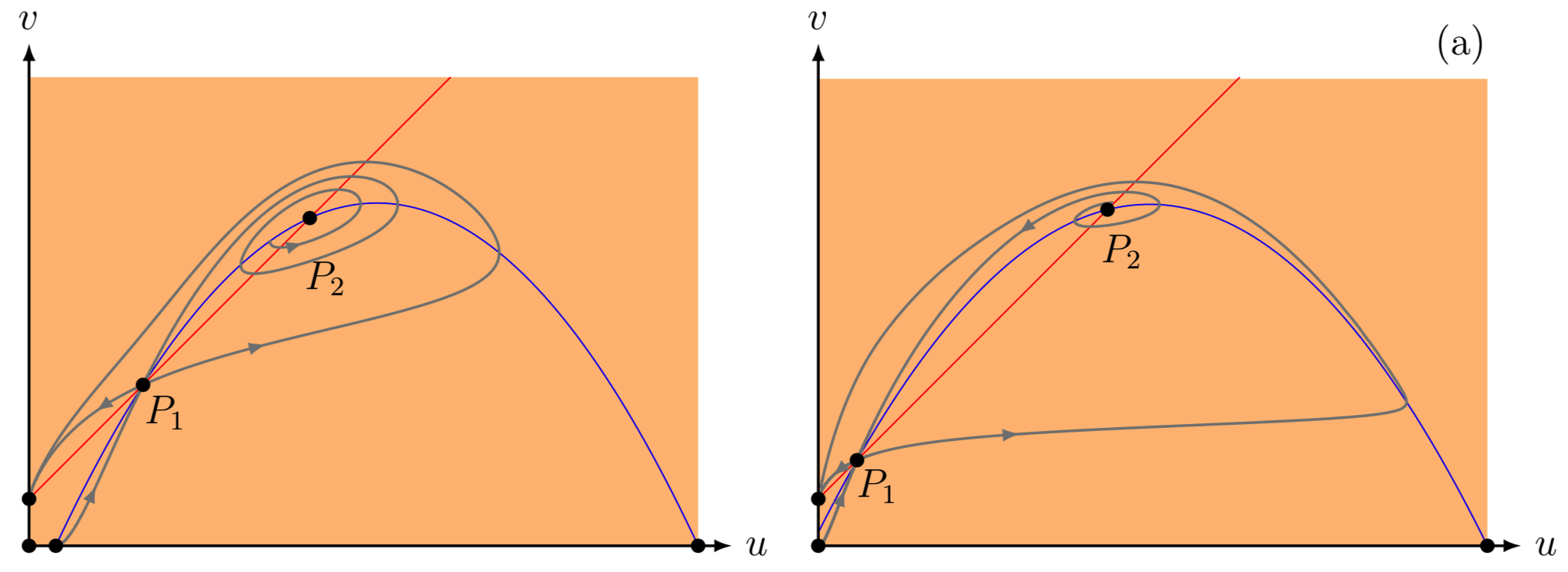}
\includegraphics[width=13cm]{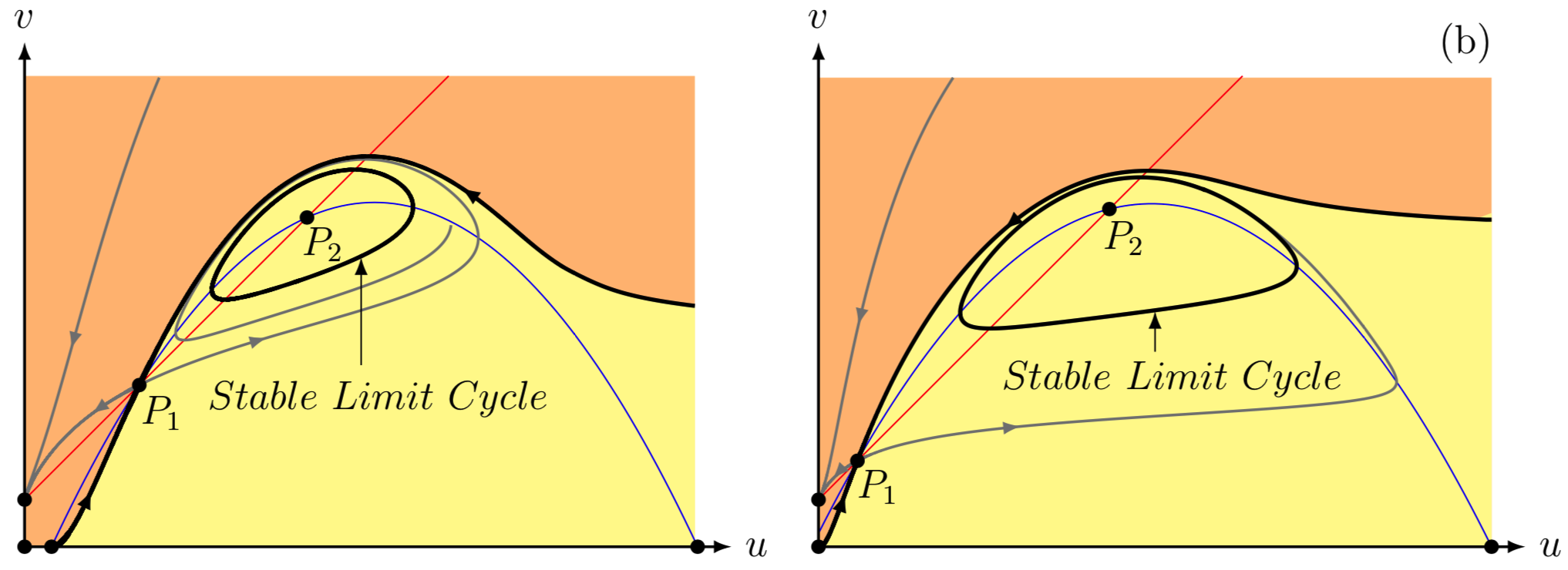}
\includegraphics[width=13cm]{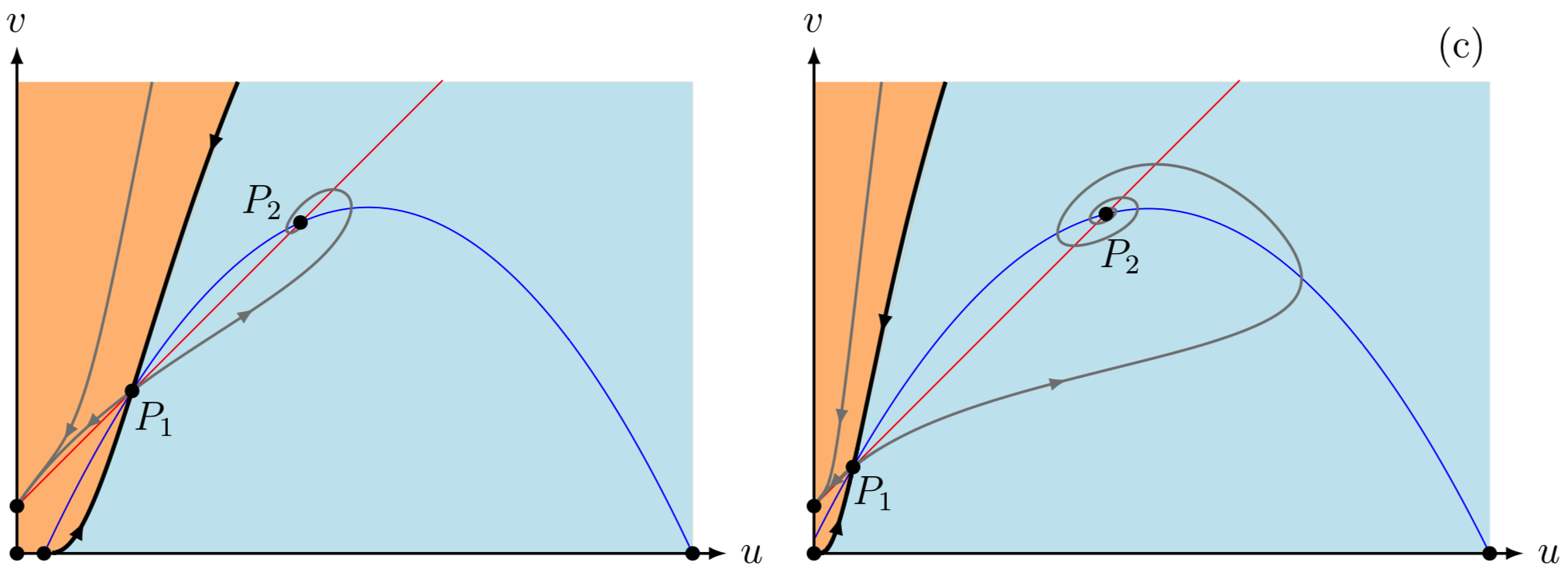}
\vspace*{-0.35cm} 
\caption{The grey region represent the basin of attraction of $P_2$, the yellow region represent the basin of attraction of a stable limit cycle and the orange region represent the basin of attraction of $(0,C)$. Moreover, the blue (red) curve represents the prey (predator) nullcline. For $C=0.07$, and $Q=0.45$, such that $\Delta>0$ \eqref{delta} and (a) $M=0.04$ (left panel) and $M=-0.01$ (right panel) the equilibrium point $(0,C)$ is global attractor. (b) $M=0.04$ (left panel) and $M=-0.01$ (right panel) the equilibrium point $P_2$ is surrounded by a stable limit cycle (grey curve) and $W^s(P_1)$ forms the separatrix curve in $\Phi$. (c) $M=0.04$ (left panel) and $M=-0.01$ (right panel) the equilibrium points $(0,C)$ and $P_2$ are attractors and $W^s(P_1)$ forms the separatrix curve in $\Phi$. Observe that the same color conventions are used in the upcoming figures. Moreover, Tikz and Matlab were used to do the simulations.}
\label{Fig.summSA}
\end{figure}

Note that the system parameters $(Q,C)$ are fixed at $(0.45, 0.07)$ and $M=0.04$ in the left panel of Figures~\ref{Fig.summSA} (a)-(c). Consequently, $u_{1,2}$ are constant. In particular, $u_1 \approx 0.1704$ and $u_2 \approx 0.4196$. Similarly, for $(Q,C)$ fixed at $(0.45, 0.07)$ and $M=-0.01$ in the right panel of Figures~\ref{Fig.summSA} (a)-(c). Consequently, $u_{1,2}$ are also constant. In particular, $u_1 \approx 0.05785$ and $u_2 \approx 0.43215$.

For $\Delta=0$, $M>-CQ$ and depending on the value of $S$, there are three different cases for the boundary of the basins of attraction in the invariant region $\Phi$, see Theorem~\ref{the2}. By continuity of the vector field in $S$, see \eqref{hta2}, we get:
\begin{enumerate}[(i)]
\item For $0<S<S_2$, the equilibrium point $(E,E+C)$ is a saddle-node attractor, see Lemma \ref{p1p2}, see Figure \ref{Fig.summp1p2} (a).
\item For $S_2<S$, the equilibrium point $(E,E+C)$ is a saddle-node repeller, see Lemma \ref{p1p2}, see Figure \ref{Fig.summp1p2} (b).
\item For $S=S_2$, the equilibrium point $(E,E+C)$ is a cusp point, see Theorem \ref{the4}, see Figure \ref{Fig.summp1p2} (c).
\end{enumerate}

\begin{figure}
\centering
\includegraphics[width=13cm]{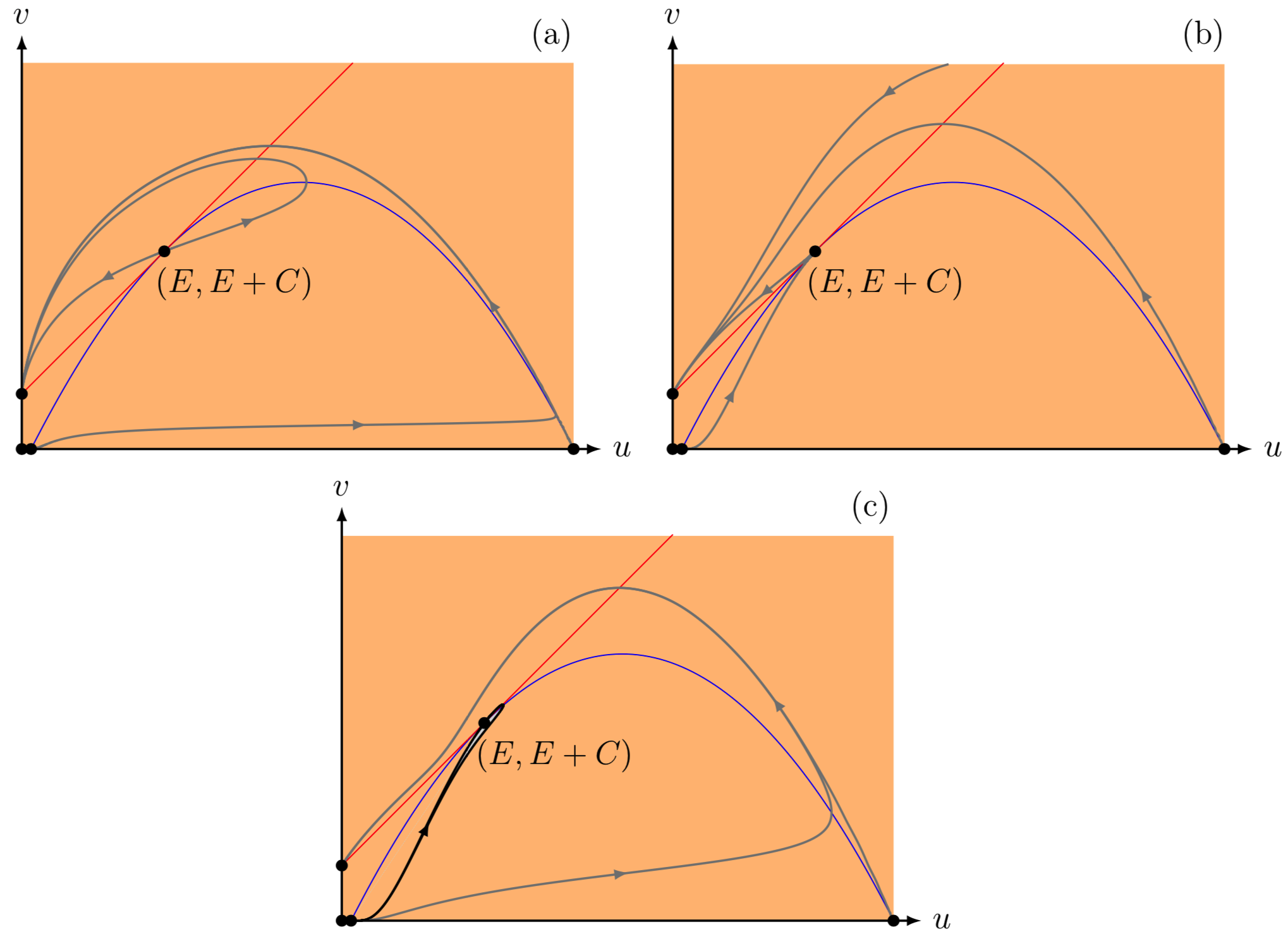}
\vspace*{-0.35cm} 
\caption{For $C=0.1$, $Q=0.5$, and $M=0.01676030$ such that $\Delta=0$ \eqref{delta}, system \eqref{hta2} has one equilibrium point $(E,E+C)=(0.25833,0.35833)$ of order two. (a) For $S<S_2=0.12919012$, the equilibrium point $(E,E)$ is a saddle-node repeller. (b) For $S>S_2$, the equilibrium point $(E,E+C)$ is a saddle-node attractor. (c) For $S=S_2$, the equilibrium point $(E,E+C)$ is a cusp point (See Figure \ref{Fig.summSA} for the color conventions.)}
\label{Fig.summp1p2}
\end{figure}

For system parameters $(Q,M,A,C)$ such that the conditions 2.(b), 2.(c) and 2.(d) presented in Subsection \ref{num} are met (weak Allee effect), system \eqref{hta2} has one attractor in the first quadrant, namely $P_{3,4}$.

For $1+M-Q>0$ and $M+CQ\leq0$ or $1+M-Q<0$ and $M+CQ\leq0$ or $1+M-Q=0$ and $M+CQ<0$ and depending on the value of $S$, there are three different cases for the boundary of the basins of attraction in the invariant region $\Phi$, see Theorem~\ref{the2}. By continuity of the vector field in $S$, see \eqref{hta2}, we get:
\begin{enumerate}[(i)]
\item For $S$ on the grey region in the bifurcation diagram showed in Figure \ref{Fig.bif}, the equilibrium point $P_{3,4}$ is unstable (see lemma \ref{p3} and \ref{p4}). Hence, $\Phi$ is the basin of attraction of $(0,C)$, see Figure \ref{Fig.summp2} (a).
\item For $S$ on the blue region in the bifurcation diagram showed in Figure \ref{Fig.bif}, the equilibrium point $P_{3,4}$ is unstable surrounded by a stable limit cycle (see lemma \ref{p3} and \ref{p4}). Therefore, $\Phi$ is the basin of attraction of the stable limit cycle, see Figure \ref{Fig.summp2} (b).
\item  For $S$ on the green region in the bifurcation diagram showed in Figure \ref{Fig.bif}, the equilibrium point $P_{3,4}$ is stable (see lemma \ref{p3} and \ref{p4}). Hence, $\Phi$ is the basin of attraction of the equilibrium point $P_{3,4}$, see Figure \ref{Fig.summp2} (c).
\end{enumerate}

Note that the system parameters $(Q,C)$ are fixed at $(0.55, 0.1)$ and $M=-0.055$ in Figures~\ref{Fig.summp2} (a)-(c). Consequently, $u_{3}$ is constant. In particular, $u_3 \approx 0.395$. Similarly, for $(Q,C)$ fixed at $(0.55, 0.1)$ and $M=-0.1$ in Figure~\ref{Fig.summSA} (d). Consequently, $u_{4}$ are also constant. In particular, $u_4 \approx 0.45$.

\begin{figure}
\centering
\includegraphics[width=13cm]{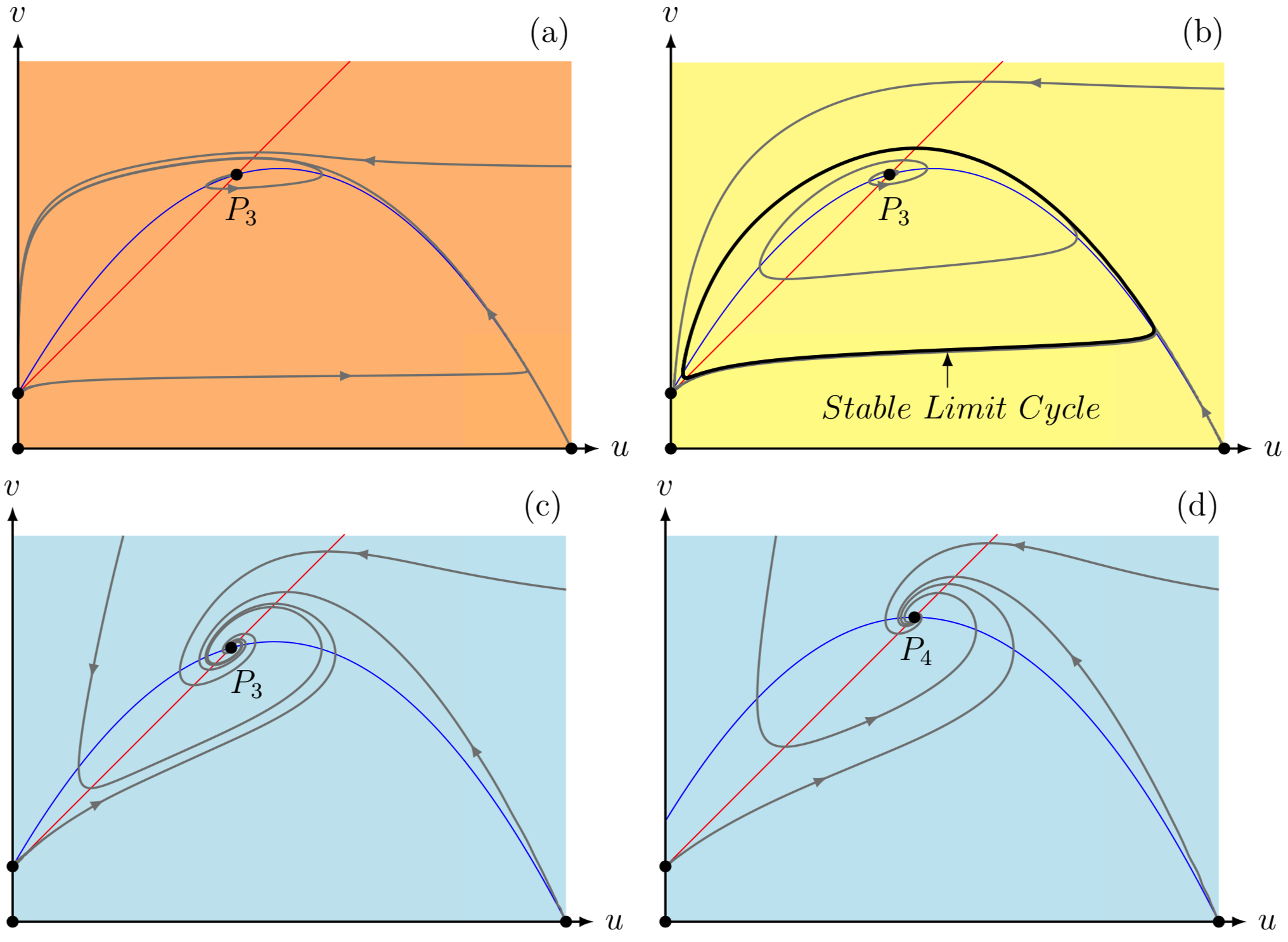}
\vspace*{-0.35cm} 
\caption{For $M=-0.055$, $C=0.1$, and $Q=0.55$, such that $u_1=0$ and (a) $S=0.01<S_k$, the equilibrium point $(0,C)$ is global attractor. (b) For $S_k<S=0.03<S_k^*$ the equilibrium point $P_3$ is surrounded by a stable limit cycle (black curve). (c) For $S=0.15>S_k^*$ the equilibrium point $P_3$ is stable. Similarly, for $M=-0.1$, $C=0.1$, and $Q=0.55$, such that $u_1<0$ and (d) For $S=0.19$ the equilibrium point $P_4$ is stable (See Figure \ref{Fig.summSA} for the color conventions).}
\label{Fig.summp2}
\end{figure}

\section{Conclusions}
In this manuscript, the Holling--Tanner predator-prey model with strong and weak Allee effect and functional response Holling type I was studied. Using a diffeomorphism we analysed a topologically equivalent system \eqref{hta2}. This system has four system parameters which determine the number and the stability of the equilibrium points. We showed that the equilibrium points $(1,0)$ and $P_1$ are always saddle points, $(M,0)$ is an unstable point. Moreover,  the equilibrium point $(0,0)$ can be a saddle or unstable equilibrium point and the equilibrium point can be stable or saddle point, see Lemmas \ref{eqax} and \ref{p1}. In contrast, the equilibrium point $P_2$ can be an attractor or a repeller, depending on the trace of its Jacobian matrix, see Lemma \ref{p2}. Furthermore, for some sets of parameters values the equilibrium point $P_1$ can collapses with $(0,C)$ and then crosses tho the second or third quadrant. Therefore, there exist one positive equilibrium point ($P_i$ with $i=3,4$) which can be an attractor or a repeller, depending on the trace of its Jacobian matrix, see Lemmas \ref{p3} and \ref{p4}. Additionally, the stable manifold of the equilibrium point $P_1$ determines a separatrix curve which divides the basins of attraction of $(0,C)$ and $P_2$, see Figure \ref{Fig.summSA}.

The equilibrium points $P_1$ and $P_2$ collapse for $\Delta=0$ \eqref{delta} and system \eqref{hta2} experiences a saddle-node bifurcation, see Theorem \ref{the5}. Additionally, for $S=f(E)$ we obtain a cusp point (Bogdanov--Takens bifurcation) \cite{xiao2}, see Theorem \ref{the4}. We summarise the behavior for changing parameters $S$ and $Q$ in Figure \ref{Fig.summp1p2}.

Additionally, we showed that the Allee effect (strong and weak) in the Holling--Tanner model \eqref{hta1} modified the dynamics of the original Holling--Tanner model \eqref{ht1}. Gonzalez-Olivares {\em et al.} \cite{gonzalez} showed that system \eqref{mht1} with $c=0$ has the extinction of both population and/or coexistence.     

Since the function $\varphi$ is a diffeomorphism preserving the orientation of time, the dynamics of system \eqref{hta2} is topologically equivalent to system \eqref{hta1}. Therefore, we can conclude that for certain population sizes, there exists self-regulation in system \eqref{hta1}, that is, the species can coexist. Moreover, for some sets of parameters values system \eqref{hta1} experiences an oscillations of the populations.
However, system \eqref{hta1} is sensitive to disturbances of the parameters, see the changes of the basin of attraction of $P_2$, $P_3$ and $P_4$ in Figures \ref{Fig.summSA} and \ref{Fig.summp2}. In addition, we showed that the self-regulation depends on the values of the parameters $S$ and $M$. Since $S:=s/(Kr)$, this, for instance, implies that increasing the intrinsic growth rate of the predator $r$ -- or the carrying capacity $K$ -- decreases the area of coexistence (related to basins of attraction of $P_2$, $P_3$ or $P_4$ in \eqref{hta2}), or, equivalent, decreasing the intrinsic growth rate of the prey $s$ decreases this area of the coexistence. Similar statements can be derived for the other system parameters of \eqref{hta1}. The impact on the basin of attraction by changing the intrinsic growth rate of the predator and the Allee threshold population level is showed in Figures \ref{Fig.summSA} and \ref{Fig.summp2}. We can see that strong Allee effect reduces the basin of attraction of the positive equilibrium point. Therefore, it reduces the coexistence and/or oscillation of both populations.

Additionally, we showed that the strong Allee effect in the Holling--Tanner model \eqref{hta2} does not modified the dynamics of system \eqref{hta2} affected by weak Allee effect (i.e. $-CQ<M\leq0$). We also proved that system \eqref{hta2} with $-CQ<M<M^*$ has always two positive equilibrium points $P_1$ and $P_2$, see Figures \ref{Fig.eqpoint} and \ref{Fig.bif}. 

\section*{References}
\bibliography{References.bib}

\begin{thebibliography}{10}
\expandafter\ifx\csname url\endcsname\relax
  \def\url#1{\texttt{#1}}\fi
\expandafter\ifx\csname urlprefix\endcsname\relax\def\urlprefix{URL }\fi
\expandafter\ifx\csname href\endcsname\relax
  \def\href#1#2{#2} \def\path#1{#1}\fi

\bibitem{yu}
S.~Yu, Global asymptotic stability of a predator-prey model with modified
  {Leslie--Gower} and {Holling--Type} {II} schemes, Discrete Dynamics in Nature
  and Society 2012.

\bibitem{zhao}
Z.~Zhao, L.~Yang, L.~Chen, Impulsive perturbations of a predator--prey system
  with modified {Leslie--Gower} and {Holling} type {II} schemes, Journal of
  Applied Mathematics and Computing 35 (2011) 119--134.

\bibitem{arrows}
D.~Arrowsmith, C.~Chapman, Dynamical systems: {Differential} equations, maps
  and chaotic behaviour, Computers and Mathematics with Applications 32 (1996)
  132--132.

\bibitem{saez}
E.~Saez, E.~Gonzalez-Olivares, Dynamics on a predator--prey model, SIAM Journal
  on Applied Mathematics 59 (1999) 1867--1878.

\bibitem{banerjee1}
M.~Banerjee, Turing and non-{Turing} patterns in two-dimensional prey-predator
  models, Applications of Chaos and Nonlinear Dynamics in Science and
  Engineering 4 (2015) 257--280.

\bibitem{ghazaryan}
A.~Ghazaryan, V.~Manukian, S.~Schecter, Travelling waves in the
  {Holling--Tanner} model with weak diffusion, Proceedings of the Royal Society
  A: Mathematical, Physical and Engineering Sciences 471 (2015) 20150045.

\bibitem{martinez}
N.~Mart{\'\i}nez-Jeraldo, P.~Aguirre, Allee effect acting on the prey species
  in a {Leslie--Gower} predation model, {Nonlinear Analysis: Real World
  Applications} 45 (2019) 895--917.

\bibitem{hanski}
I.~Hanski, H.~Henttonen, E.~Korpim{\"a}ki, L.~Oksanen, P.~Turchin,
  Small--rodent dynamics and predation, Ecology 82 (2001) 1505--1520.

\bibitem{hanski2}
I.~Hanski, L.~Hansson, H.~Henttonen, Specialist predators, generalist
  predators, and the microtine rodent cycle, The Journal of Animal Ecology
  (1991) 353--367.

\bibitem{hanski3}
I.~Hanski, P.~Turchin, E.~Korpimaki, H.~Henttonen, Population oscillations of
  boreal rodents: regulation by mustelid predators leads to chaos, Nature 364
  (1993) 232.

\bibitem{turchin2}
P.~Turchin, I.~Hanski, An empirically based model for latitudinal gradient in
  vole population dynamics, The American Naturalist 149 (1997) 842--874.

\bibitem{turchin}
P.~Turchin, Complex population dynamics: a theoretical/empirical synthesis,
  Vol.~35 of Monographs in population biology, {Princeton} {University}
  {Press}, Princeton, N.J., 2003.

\bibitem{may}
R.~May, Stability and complexity in model ecosystems, Vol.~6 of Monographs in
  population biology, {Princeton} {University} {Press}, Princeton, N.J., 1974.

\bibitem{freedman}
H.~Freedman, Deterministic mathematical models in population ecology, Pure and
  applied mathematics (Dekker); 57, Wiley, New York, 1980.

\bibitem{aziz}
M.~Aziz-Alaoui, M.~Daher, Boundedness and global stability for a predator--prey
  model with modified {Leslie--Gower} and {Holling}--type {II} schemes, Applied
  Mathematics Letters 16 (2003) 1069--1075.

\bibitem{arancibia}
C.~Arancibia-Ibarra, E.~Gonzalez-Olivares, A modified {Leslie--Gower}
  predator--prey model with hyperbolic functional response and {Allee} effect
  on prey, BIOMAT 2010 International Symposium on Mathematical and
  Computational Biology (2011) 146--162.

\bibitem{feng}
P.~Feng, Y.~Kang, Dynamics of a modified {Leslie-Gower} model with double
  {Allee} effects, Nonlinear Dynamics 80 (2015) 1051--1062.

\bibitem{singh}
A.~Singh, S.~Gakkhar, Stabilization of modified {Leslie--Gower} prey--predator
  model, Differential Equations and Dynamical Systems 22 (2014) 239--249.

\bibitem{korobei}
A.~Korobeinikov, A {Lyapunov} function for {Leslie--Gower} predator--prey
  models, Applied Mathematics Letters 14 (2001) 697--699.

\bibitem{kramer}
A.~Kramer, L.~Berec, J.~Drake, Allee effects in ecology and evolution, Journal
  of Animal Ecology 87 (2018) 7--10.

\bibitem{allee}
W.~Allee, O.~Park, A.~Emerson, T.~Park, K.~Schmidt, Principles of animal
  ecology, WB Saundere Co. Ltd., Philadelphia, 1949.

\bibitem{berec}
L.~Berec, E.~Angulo, F.~Courchamp, Multiple {Allee} effects and population
  management, Trends in Ecology \& Evolution 22 (2007) 185--191.

\bibitem{courchamp}
F.~Courchamp, T.~Clutton-Brock, B.~Grenfell, Inverse density dependence and the
  {Allee} effect, Trends in Ecology \& Evolution 14 (1999) 405--410.

\bibitem{stephens}
P.~Stephens, W.~Sutherland, Consequences of the {Allee} effect for behaviour,
  ecology and conservation, Trends in Ecology \& Evolution 14 (1999) 401--405.

\bibitem{liermann}
M.~Liermann, R.~Hilborn, Depensation: evidence, models and implications, Fish
  and Fisheries 2 (2001) 33--58.

\bibitem{courchamp3}
F.~Courchamp, B.~Grenfell, T.~Clutton-Brock, Impact of natural enemies on
  obligately cooperative breeders, Oikos 91 (2000) 311--322.

\bibitem{courchamp2}
F.~Courchamp, L.~Berec, J.~Gascoigne, Allee effects in ecology and
  conservation, Oxford University Press, 2008.

\bibitem{stephens2}
P.~Stephens, W.~Sutherland, R.~Freckleton, What is the {Allee} effect?, Oikos
  87 (1999) 185--190.

\bibitem{yue2}
Z.~Yue, X.~Wang, H.~Liu, Complex dynamics of a diffusive {Holling--Tanner}
  predator--prey model with the {Allee} effect, Abstract and Applied Analysis
  2013.

\bibitem{arancibia2}
C.~Arancibia-Ibarra, E.~Gonzalez-Olivares, The {Holling--Tanner} model
  considering an alternative food for predator, Proceedings of the 2015
  International Conference on Computational and Mathematical Methods in Science
  and Engineering CMMSE 2015 (2015) 130--141.

\bibitem{blows}
T.~Blows, N.~Lloyd, The number of limit cycles of certain polynomial
  differential equations, Proceedings of the Royal Society of Edinburgh:
  Section A Mathematics 98 (1984) 215--239.

\bibitem{chicone}
C.~Chicone, Ordinary Differential Equations with Applications, Vol.~34 of Texts
  in Applied Mathematics, World Scientific, Springer-Verlag New York, 2006.

\bibitem{dumortier}
F.~Dumortier, J.~Llibre, J.~Art{\'e}s, Qualitative theory of planar
  differential systems, Springer Berlin Heidelberg, Springer-Verlag Berlin
  Heidelberg, 2006.

\bibitem{perko}
L.~Perko, Differential Equations and Dynamical Systems, Springer New York,
  2001.

\bibitem{xiao2}
D.~Xiao, S.~Ruan, {Bogdanov--Takens} bifurcations in predator--prey systems
  with constant rate harvesting, Fields Institute Communications 21 (1999)
  493--506.

\bibitem{matcont}
A.~Dhooge, W.~Govaerts, Y.~Kuznetsov, Matcont: a matlab package for numerical
  bifurcation analysis of odes, ACM Transactions on Mathematical Software
  (TOMS) 29 (2003) 141--164.

\bibitem{gaiko}
V.~Gaiko, Global {Bifurcation} {Theory} and {Hilbert's} {Sixteenth} {Problem},
  Vol. 562 of Mathematics and Its Applications, Springer Science \& Business
  Media, 2013.

\end{thebibliography}

\end{document}